\newcommand{\R}{\mathbb{R}}
\newcommand{\C}{\mathbb{C}}
\newcommand{\Ccal}{\mathcal{C}}
\newcommand{\las}{\mathrm{las}}
\newcommand{\spann}{\mathrm{span}}
\newcommand{\Sym}{\mathrm{Sym}}
\newcommand{\Ort}[1]{\mathrm{O}(#1)}
\newcommand{\GL}[1]{\mathrm{GL}({#1})}
\newcommand{\U}[1]{\mathrm{U}(#1)}
\newcommand{\dU}[1]{\mathfrak{u}(#1)}
\pgfplotsset{compat=1.17} 
\def\MR#1{\href{http://www.ams.org/mathscinet-getitem?mr=#1}{MR#1}}
\newtheorem{theorem}{Theorem}[section]
\newtheorem{proposition}[theorem]{Proposition}
\newtheorem{conjecture}[theorem]{Conjecture}
\theoremstyle{remark}
\theoremstyle{definition}
\numberwithin{equation}{section}
\numberwithin{table}{section}
\numberwithin{figure}{section}
\title[The Lasserre hierarchy for equiangular lines with a fixed angle]{The Lasserre hierarchy for equiangular\\ lines with a fixed angle}
\author{David de Laat}
\address{D.\ de Laat, Delft Institute of Applied Mathematics\\
Delft University of Technology\\ Delft, The Netherlands} \email{d.delaat@tudelft.nl}
\author{Fabr\'\i cio Caluza Machado}
\address{F.C.\ Machado, Instituto de Matem\'atica e Estat\'\i stica, Universidade de S\~ao Paulo, Rua do 
Mat\~ao 1010, 05508-090 S\~ao Paulo/SP, Brazil.}
\email{fabcm1@gmail.com}
\author{Willem de Muinck Keizer}
\address{W.H.H.\ de Muinck Keizer, Delft Institute of Applied Mathematics\\
Delft University of Technology\\ Delft, The Netherlands} \email{w.h.h.demuinckkeizer@tudelft.nl}
\date{4 September, 2023} 
\begin{document}

\begin{abstract}
We compute the second and third levels of the Lasserre hierarchy for the spherical finite distance problem. A connection is used between invariants in representations of the orthogonal group and representations of the general linear group, which allows computations in high dimensions. We give new linear bounds on the maximum number of equiangular lines in dimension $n$ with common angle $\arccos \alpha$. These are obtained through asymptotic analysis in $n$ of the semidefinite programming bound given by the second level.
\end{abstract}

\subjclass[2020]{90C22, 52C17}

\maketitle

\tableofcontents

\section{Introduction}

In discrete geometry, linear programming bounds are important for bounding the quality of geometric configurations \cite{delsarte77,MR1181534,MR1973059,bachoc09,cohn07}.
These bounds have been used to show that constructions are optimal, for example the sphere packings coming from the $\mathsf{E}_8$ and Leech lattices \cite{MR3664816,MR3664817}. Moreover, the best known asymptotic bounds for binary codes \cite{MR439403}, spherical codes and sphere packing densities \cite{KL78,MR3229046}, and certain problems in Euclidean Ramsey theory \cite{MR3341578,MR4439455} are derived from the linear programming bounds. 

However, for many instances the linear programming bounds are not sharp and research is being done into semidefinite programming bounds. Initiated by Schrijver for binary codes, and Bachoc and Vallentin for the kissing number problem, three-point semidefinite programming bounds have been developed which take into account interactions between triples of points, as opposed to pairs of points for the linear programming bounds \cite{MR2236252,bachoc08,MR2947943,cohnlaatsalmon}. For the equiangular lines problem, this has been generalized to a hierarchy of $k$-point bounds \cite{deLaat2021}. For many problems, these higher-order bounds lead to significant improvements, including new optimality proofs via sharp bounds; see, e.g., \cite{MR2469257,MR2947943,MR4263438}. However, until now, no new asymptotic results in the dimension have been derived from these  bounds.

The Lasserre hierarchy \cite{lasserre01}  and the dual sum-of-squares approach by Parrilo \cite{Par00} are important for obtaining bounds for hard problems in combinatorial optimization; see, e.g., \cite{MR1997246,MR2952510,MR3654194}. De Laat and Vallentin have generalized the Lasserre hierarchy for the independent set problem to a continuous setting so that it can be applied to problems in discrete geometry \cite{laat15}. The first level of the hierarchy reduces to the linear programming bound. The second level, however, is a $4$-point bound from which the $k$-point bound with $k=4$ mentioned above can be derived by removing many of the constraints. Hence, the second level of the Lasserre hierarchy, although more difficult to compute, is likely to improve bounds. The second level of this hierarchy has only been computed for an energy minimization problem in dimension $3$ \cite{laat16}. We would also like to mention that an adaptation of this hierarchy for the sphere packing problem was given recently \cite{cohnsalmon}. In summary, computing the higher levels of this hierarchy is a promising approach for various problems in discrete geometry.

In this paper, we compute the second and third level of the Lasserre hierarchy for the equiangular lines problem with a fixed angle $\arccos \alpha$. This problem asks to determine the maximum number $N_\alpha(n)$ of lines through the origin in $\R^n$ such that the angle between any pair of lines is $\arccos \alpha$. Recently, there have been several breakthroughs for this problem, starting with the result by Bukh in 2016 that $N_\alpha(n)$ is at most linear in $n$ for any fixed $\alpha$ \cite{bukh16}. In 2018 Balla, Dr\"axler, Keevash, and Sudakov showed $\limsup_{n\to\infty} N_\alpha(n)/n$ is at most $1.93$ unless $\alpha = 1/3$, in which case it is $2$ \cite{balla18}. In 2021 Jiang, Tidor, Yao, Zhang, and Zhao showed
\begin{equation}\label{eq:realasymptoticslope}
N_{\alpha}(n) = \lfloor (a+1)(n-1)/(a-1) \rfloor
\end{equation}
for $\alpha = 1/a$ with $a$ an odd integer $a \ge 3$ and all sufficiently large $n$ \cite{MR4334975}.  The significance of these particular inner products $\alpha = 1/a$ lies in the fact that the equiangular lines problem without fixed angle can be solved in any dimension by computing $N_\alpha(n)$ for a given finite list of such $\alpha$; see \cite{larman77}. The linear bound by Bukh holds for all dimensions, but the slope is huge. The results from \cite{balla18} and \cite{MR4334975} hold only for $n \geq n_\alpha$ for a very large $n_\alpha$. In these results, the parameters are so large because the proofs rely on Ramsey theory. 

Asymptotically linear bounds not relying on Ramsey theory have also been given \cite{glazyrin18, balla2021}. The best current bound was given by Balla in 2021. He proved (Theorem 1 in \cite{balla2021}) for all dimensions $n$ and for $\alpha \in (0,1)$ the bound 
\begin{equation}\label{eq:ballasmalldim}
N_{\alpha}(n) \leq \frac{\sqrt{n}}{2 \alpha^3} + \frac{1+\alpha}{2 \alpha}n.
\end{equation}
Our main result is the following conjecture, which we prove for $\alpha = 1/a$ with $a = 3,5,7,9,11$.

\begin{conjecture}\label{conj:las2}
Let $\alpha \in (0,1)$. The optimal objective of the second level $\las_2(n)$ of the Lasserre hierarchy for bounding $N_\alpha(n)$ satisfies
\begin{equation}\label{eq:ourbound}
\las_2(n) \leq c_\alpha + \frac{1+\alpha}{2 \alpha}n
\end{equation}
for $n \geq n_\alpha$, where $c_\alpha$ does not depend on the dimension.
\end{conjecture}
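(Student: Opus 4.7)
The plan is to construct, for each $\alpha = 1/a$ with $a \in \{3,5,7,9,11\}$, an explicit dual feasible solution to the second-level Lasserre SDP whose objective value equals $c_\alpha + \frac{1+\alpha}{2\alpha}n$ for all $n \geq n_\alpha$. Since $\las_2(n)$ is the primal optimum, any such dual point gives an upper bound of the required form, and the leading coefficient $(1+\alpha)/(2\alpha)$ is the asymptotic slope already appearing in Balla's bound \eqref{eq:ballasmalldim}, so the goal is essentially to eliminate the $\sqrt{n}$ term via the extra power of the Lasserre hierarchy.

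First I would pass to the symmetrized dual. Exploiting the orthogonal group invariance of the problem together with the connection between invariants of the orthogonal group and invariants of the general linear group that is the main technical tool of this paper, the second-level SDP reduces to a semidefinite program whose block structure stabilizes in $n$ and whose matrix entries become explicit rational functions of $n$. This reduction makes numerical computation tractable for each small $a$ and, more importantly, provides the framework in which feasibility can be verified uniformly in $n$.

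Second I would compute $\las_2(n)$ numerically for each target $\alpha$ over a wide range of dimensions. From the resulting optima I would extract the asymptotic shape of the dual variables and propose a closed-form ansatz for them as rational functions of $n$ with coefficients depending on $\alpha$. The objective of this ansatz, evaluated symbolically, should match $c_\alpha + \frac{1+\alpha}{2\alpha}n$ on the nose. Third I would verify feasibility: each PSD constraint of the symmetrized SDP becomes positive semidefiniteness of a matrix whose entries are rational in $n$, so positivity for all large $n$ reduces to the signs of leading Laurent coefficients (equivalently, eigenvalue expansions in $1/n$), while the precise threshold $n_\alpha$ is pinned down by a finite exact-arithmetic check for small $n$.

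The main obstacle is guessing the closed-form ansatz in the second step. The symmetrized Lasserre level-$2$ SDP has many invariant blocks linked by intricate equality and inequality constraints, and matching the slope $(1+\alpha)/(2\alpha)$ with no $\sqrt{n}$ correction requires the dual to exploit identities that are unavailable at the three-point bound level of \cite{bachoc08,deLaat2021}. A secondary obstacle is that the ansatz appears to be genuinely $\alpha$-dependent, so a uniform construction covering all $\alpha \in (0,1)$ is out of reach by this approach; this is why the conjecture can only be proved for the five listed values of $a$ rather than in full generality.
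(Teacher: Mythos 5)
Your proposal follows essentially the same route as the paper: symmetry-reduce the level-two SDP via the $\Ort{n}$--$\GL{t}$ invariant theory so that all entries are rational functions of $n$, fit an exact closed-form ansatz for the solution matrices by interpolating numerical solutions in very high dimensions and rounding with LLL, and then certify feasibility for all $n \geq n_\alpha$ in exact arithmetic, with finitely many fixed-dimension SDPs to lower the threshold. One caveat on terminology: since \eqref{pr:las} is a minimization whose optimal value is $\las_2(n)$, the certificate you need is a \emph{feasible point of that minimization itself} (whose objective upper-bounds $\las_2(n)$), not a dual point, which by weak duality would only bound $\las_2(n)$ from below.
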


For the values of $\alpha$ mentioned, we provide both an explicit small $c_\alpha$ and an explicit small $n_\alpha$, so we give new bounds in many dimensions not covered by (\ref{eq:realasymptoticslope}). Furthermore, because of the $\sqrt{n}$ term, we also improve on (\ref{eq:ballasmalldim}) for these $\alpha$. Our explicit linear bounds are listed in Table~\ref{table:asymptotic}. These are the first asymptotic bounds in the dimension coming from semidefinite programming. We have also computed the third level of the Lasserre hierarchy, with which we obtain new bounds for fixed dimensions. More detailed results are given in Section \ref{sec:applications}.  The third level also provides numerical evidence for the existence of linear bounds for all dimensions, of which the asymptotic slopes improve on (\ref{eq:ballasmalldim}) and our bound (\ref{eq:ourbound}). 
\begin{table}
\begin{tabular}{@{}ccc@{}} 
\toprule
$\alpha$ & $c_\alpha$ & $n_\alpha$\\
\midrule 
$1/3$ & $4$ & $13$\\
$1/5$ & $30$ & $87$\\
$1/7$ & $116$ & $261$\\
$1/9$ & $316$ & $166018$\\
$1/11$ & $699$ & $751307$\\
\bottomrule
\end{tabular}
\bigskip
\caption{We prove $N_\alpha(n) \leq c_\alpha + \frac{1+\alpha}{2\alpha} n$ for all $n \geq n_\alpha$. The constants $n_{1/9}$ and $n_{1/11}$ can also be made smaller by solving finitely many semidefinite programs.}
\label{table:asymptotic}
\end{table}

We would like to reflect briefly on how surprising Conjecture~\ref{conj:las2} is for a semidefinite programming bound. For fixed $n$, finite convergence of the Lasserre hierarchy is guaranteed, so in principle this hierarchy can be used to solve any equiangular lines problem. However, for a fixed level of the hierarchy, it could have been that the bound on $N_\alpha(n)$ becomes very bad for large $n$. Indeed, this is the behavior we see for the Delsarte and $k$-point bounds, where for any fixed $\alpha$ and $k$, the bound on $N_\alpha(n)$ grows rapidly as a function of $n$. This is a testament to the strength of the Lasserre hierarchy.

For the Lasserre hierarchy we follow \cite{laat15}. Consider the infinite graph $G$ on the vertex set $S^{n-1}$, where two distinct vertices $x$ and $y$ are adjacent  if $x \cdot y$ do not lie in some prescribed set $D$. For finite $D$ we call such $G$ a spherical finite distance graph and for the equiangular lines problem with a fixed angle we set $D = \{\pm \alpha\}$. An independent set is a subset  of the vertex set in which no two vertices are adjacent. Denote by $\mathcal{I}_t$ the  set of all independent sets of size at most $t$. This set is given a topology; see \cite{laat15}. Let $\Ccal(X)$ be the space of real-valued continuous functions on a topological space $X$, and define the operator 
$
A_t \colon \Ccal(\mathcal{I}_t \times \mathcal{I}_t)_{\mathrm{sym}} \to \Ccal(\mathcal{I}_{2t})
$
by
\[
A_tK(S) = \sum_{\substack{J,J' \in \mathcal{I}_t\\  J\cup J' = S}} K(J,J').
\]
Here $\Ccal(\mathcal{I}_t \times \mathcal{I}_t)_\mathrm{sym}$ is the space of continuous functions $K(J,J')$ that are symmetric in $J$ and $J'$. We define $\Ccal(\mathcal{I}_t \times \mathcal{I}_t)_{\succeq 0}$ to be the cone of positive kernels, which is the set of the symmetric continuous functions that satisfy
\[
\sum_{i,j=1}^N c_i c_j K(J_i, J_j) \geq 0
\]
for all $N \geq 0$, $c \in \R^N$, and $J_1,\ldots,J_N \in \mathcal{I}_t$. The Lasserre hierarchy for this problem is
\begin{mini}
{}{K(\emptyset, \emptyset)}{}{}
\label{pr:las}
\addConstraint{K \in \Ccal(\mathcal{I}_t \times \mathcal{I}_t)_{\succeq 0}}{}{}
\addConstraint{A_t K(S) \leq -1_{\mathcal{I}_{=1}}(S),\qquad}{}{S \in \mathcal{I}_{2t}\setminus\{\emptyset\},}
\end{mini}
where $1_{\mathcal{I}_{=1}}$ is the indicator function of the set $\mathcal{I}_{=1}$ of independent sets of size $1$. Let us verify that this program bounds the independence number. If $C$ is an independent set and $K$ a feasible kernel, then we have
\[
0 \leq \sum_{\substack{J,J' \in \mathcal{I}_t\\J,J' \subseteq C}} K(J,J') = \sum_{\substack{S \in \mathcal{I}_{2t}\\ S \subseteq C}} A_tK(S) \leq K(\emptyset, \emptyset) - |C|,
\]
which shows any feasible solution to \eqref{pr:las} gives an upper bound on the independence number.

To compute the second ($t=2$) and third ($t=3$) levels we make essential use of symmetry. The action of the orthogonal group $\Ort{n}$ on $S^{n-1}$ induces an action on $\mathcal{I}_t$ and $\mathcal{I}_{2t}$, and hence a linear action on $\Ccal(\mathcal{I}_t \times \mathcal{I}_t)$ and $\Ccal(\mathcal{I}_{2t})$ by $\gamma K(J,J') = K(\gamma^{-1} J, \gamma^{-1} J')$ and $\gamma f(S) = f(\gamma^{-1} S)$. By compactness of the orthogonal group, one may restrict to $\Ort{n}$-invariant kernels, which are kernels $K$ satisfying $\gamma K = K$ for all $\gamma \in \Ort{n}$. After this restriction, it is sufficient to impose the constraints $A_t K(S) \leq -1_{\mathcal{I}_{=1}}(S)$ for orbit representatives $S$ of $\mathcal{I}_{2t}\setminus\{\emptyset\}$, of which there are finitely many. Indeed, we may enumerate them by listing all matrices of size at most $2t$ that are positive semidefinite and have rank at most $n$ with ones on the diagonal and elements from $D$ elsewhere. These matrices are Gram matrices of the orbit representatives and, up to simultaneous permutations of rows and columns, are in one to one correspondence with the orbits.

The first level of the hierarchy is equal to the Lov\'asz theta prime number \cite{laat16}, and this can be reduced to the Delsarte, Goethals, Seidel linear programming bound using Schoenberg's characterization \cite{schoenberg42,bachoc09}. In this paper, we give a 
procedure for parametrizing the kernels $K \in \Ccal(\mathcal{I}_t \times \mathcal{I}_t)$ for general $t$, through which we can compute the $t$-th level of the hierarchy using semidefinite programming.

We construct positive, $\Ort{n}$-invariant kernels as follows. Under the action of $\Ort{n}$, the space $\mathcal{I}_t$ decomposes as a disjoint union of finitely many orbits $X_1,\ldots, X_N$. Fix an orbit representative $R_i$ for each orbit $X_i$, and let $H_i$ be the stabilizer subgroup of $\Ort{n}$ with respect to $R_i$. Given an irreducible, unitary representation $\pi \colon \Ort{n} \to \mathrm{U}(V)$, for each $i$ we let $e_{\pi,i,1},\ldots,e_{\pi,i,d_{\pi,i}}$ be a basis of the space of invariants
\[
V^{H_i} = \big\{ v \mid \pi(h) v = v \text{ for all } h \in H_i \big\}.
\]
For $J \in \mathcal I_t$, define $i(J)$ to be the index such that $J \in X_{i(J)}$ and let $s \colon \mathcal I_t \to O(n)$ be a function such that $s(J) R_{i(J)} = J$ for all $J \in \mathcal I_t$. For each $\pi$ let $F^\pi$ be a positive semidefinite matrix whose rows and columns are indexed by pairs $(i,j)$ with $1 \leq i \leq N$ and $1 \leq j \leq d_{\pi, i}$, where we assume only finitely many of these matrices are nonzero. The kernel $K \in \Ccal(\mathcal{I}_t \times \mathcal{I}_t)$ defined by
\begin{equation}\label{eq:kernelfourier}
K(J_1,J_2) = \sum_\pi \sum_{j_1, j_2} F_{(i(J_1),j_1), (i(J_2), j_2)}^\pi \big\langle \pi(s(J_1)) e_{\pi,i(J_1),j_1}, \pi(s(J_2)) e_{\pi,i(J_2),j_2} \big\rangle
\end{equation}
is continuous, positive, $\Ort{n}$-invariant, and does not depend on the choice of the function $s$. Moreover, any positive, $\Ort{n}$-invariant kernel can be written as a uniformly absolutely converging sequence of such kernels (in fact, as a single infinite series of the above form). A similar characterization of positive, invariant kernels goes back to Schoenberg \cite{schoenberg42} for the sphere and Bochner for general homogeneous spaces \cite{bochner41}. In \cite[Theorem 3.4.4]{laatthesis16} a variant for finitely many orbits is given.

In all applications in this paper, the vectors in $R_i$ are linearly independent. In this case, the stabilizer subgroups $H_i$ when considering the $t$-th level of the Lasserre hierarchy are isomorphic to $S(R_i) \times \Ort{n-t_i}$, where $t_i = |R_i|$ and $S(R_i)$ is a finite subgroup of $\Ort{t_i}$. To compute the inner product in \eqref{eq:kernelfourier}, we need to explicitly construct bases of $V^{H_i}$. 

Historically, a description of the invariant subspace $V^{\Ort{n-t}}$ was found when studying the Stiefel manifold $\Ort{n}/\Ort{n-t}$. In 1977, Gelbart discovered a beautiful connection between the harmonic analysis of the Stiefel manifold and the representation theory of general linear groups \cite{gelbart74}. Using weight theory, the irreducible representations of $\Ort{n}$ and of $\GL{t}$ can both be labeled by tuples $\lambda$ of integers, and he showed that the dimension of $V^{\Ort{n-t}}$ is equal to the dimension of the representation of $\GL{t}$ with the same label $\lambda$. Later Gross and Kunze \cite{grosskunze77} gave an explicit isomorphism with additional properties between $V^{\Ort{n-t}}$ and the representation of $\GL{t}$. To construct explicit bases of the invariant subspaces and to compute the inner products, we will use: bases of the representations of $\GL{t}$; the isomorphism by Gross and Kunze; and further techniques to deal with the finite groups $G$.  With this explicit description of the invariant subspaces, the kernel in \eqref{eq:kernelfourier} can be evaluated at points. This allows us to set up and compute the second and third level of the Lasserre hierarchy for spherical finite distance problems. 

In Section~\ref{sec:gltrep} and \ref{sec:Gross-Kunze} we discuss an explicit formulation of the matrix coefficients of $\GL{t}$ and the construction of Gross and Kunze. In  Section~\ref{sec:stabinv} these constructions are used to compute the invariants subspaces.  In Section~\ref{sec:sdp} an efficient semidefinite programming formulation is presented. In Section~\ref{sec:applications} we discuss computational results, where we first give improved bounds on $N_\alpha(n)$ in fixed dimensions, as shown in Figures~\ref{fig:a5} and \ref{fig:a7}, and then consider bounds for more general spherical finite distance problems. In Section~\ref{sec:asymptotics} an asymptotic analysis of the bounds is given, which leads to the proof of Conjecture~\ref{conj:las2} for the values of $\alpha$ mentioned. Finally, in Section~\ref{sec:moreasymptotics} we give a formulation for the limit  semidefinite program.

\section{Representations of the general linear group} \label{sec:gltrep}

In this section we give an explicit procedure to compute the matrix coefficients of the representations of the general linear group $\GL{t}$ over the complex numbers. We use Weyl's construction of the representations, following Chapter 15 in \cite{fulton91}. It is necessary to go through this material in some detail, since we will use the specifics of this  construction in Section \ref{sec:stabinv} and \ref{sec:moreasymptotics}.

Let $\lambda = (\lambda_1, \dots, \lambda_t)$ be a partition of $d$ with $\lambda_1 \geq \dots \geq \lambda_t \geq 0$.
The Young diagram associated with $\lambda$ consists of left-aligned rows of boxes, with $\lambda_i$ boxes in the $i$th row. For instance, the Young diagram of the partition $(4,3,1)$ is
\[
\ydiagram{4,3,1}
\]
To the partition $\lambda$ we may associate two other tuples of integers: The conjugate partition $\mu$, with $\mu_j$  the length of column $j$ in the diagram of $\lambda$, and the tuple of integers $a = (a_1, \dots, a_t)$, defined by $a_i = \lambda_{i} - \lambda_{i+1}$ (setting $\lambda_{t+1} = 0$), so that $a_i$ is the number of columns with length $i$ in the Young diagram of $\lambda$. In our example, the conjugate partition is $\mu = (3,2,2,1)$ and we have $a = (1,2,1)$.

Let $\mathbb{C}^t$ be the tautological representation of $\GL{t}$, which is the representation that sends a matrix to itself, and consider the subrepresentation of $(\mathbb{C}^t)^{\otimes d}$ given by
\[
A^{a}\C^t = \Sym^{a_t}(\Lambda^t \C^t) \otimes \Sym^{a_{t-1}}(\Lambda^{t-1} \C^t) \otimes \cdots \otimes \Sym^{a_1}(\C^t).
\]
To improve readability, we will use~$\cdot$ for both the tensor and symmetric tensor products, since the length of the wedge product prevents confusion. 
To obtain an irreducible representation, we consider a quotient of $A^{a}\C^t$. Let $p$ and $q$ be the lengths of two consecutive columns of the Young diagram and let $v_1, \dots, v_p, w_1, \dots, w_q \in \C^t$. Let $r$ be an integer with $1 \leq r \leq q$. Consider the difference between an element of $A^a\C^t$ of the form
\[
x \cdot (v_1 \wedge \cdots \wedge v_p)\cdot (w_1 \wedge \cdots \wedge w_q) \cdot y
\]
and 
\begin{align*}
&x \cdot \Big(\sum (v_1 \wedge \cdots \wedge w_1 \wedge \cdots \wedge w_r \wedge \cdots \wedge v_p) \\
&\quad \quad \cdot (v_{i_1} \wedge v_{i_2} \wedge \cdots \wedge v_{i_r} \wedge w_{r+1} \wedge \cdots \wedge w_q) \Big)\cdot y,
\end{align*}
where the sum is over all $1 \leq i_1 < i_2 < \cdots < i_r \leq p$ and the elements $w_1, \dots, w_r$ are inserted at the positions $i_1, \dots, i_r$ in $v_1 \wedge \cdots \wedge v_p$. Next, consider the subspace generated by such differences. This subspace is invariant under the action of $\GL{t}$ and hence the quotient of $A^a \C^t$ by this subspace is a representation of $\GL{t}$. This representation is denoted by $W$ and the associated group homomorphism by
\begin{equation} \label{eq:rhorep}
    \rho \colon \GL{t} \to \mathrm{GL}(W).
\end{equation}
This is an irreducible polynomial representation of $\GL{t}$, and all irreducible polynomial representations of $\GL{t}$ are obtained in this way for a unique $\lambda$; see Theorem 6.3 and Proposition 15.47 in \cite{fulton91}.

We now construct a basis of $W$ using the \emph{semistandard tableaux} on the Young diagram of~$\lambda$. A tableau is obtained by placing in each box of the Young diagram an integer between $1$ and $t$. We say a tableau is semistandard if the entries in each row are nondecreasing and the entries in each column are strictly increasing. Let $T$ be a tableau and let $T(i,j)$ be the entry of $T$ in the $i$th row and the $j$th column. Define $e_T$ to be the image in $W$ of the element
\[
\prod_{j = 1}^{\lambda_1} e_{T(1,j)} \wedge e_{T(2,j)} \wedge \cdots \wedge e_{T(\mu_j,j)} \in A^{a}\C^t,
\]
where $e_1,\dots,e_t$ is the standard basis of $\C^t$. The set of all $e_T$ with $T$ a semistandard tableau on $\lambda$ is a basis of $W$ (Proposition 15.55 in \cite{fulton91}). 

We would now like to compute the matrix coefficients of the representation $\rho$ with respect to the semistandard tableaux basis. For a tableau $T$ we have
\begin{align*}
    & \rho(A) e_T = \rho(A) \prod_{j = 1}^{\lambda_1} e_{T(1,j)} \wedge e_{T(2,j)} \wedge \cdots \wedge e_{T(\mu_j,j)} \\
    & \quad = \prod_{j = 1}^{\lambda_1} A e_{T(1,j)} \wedge A e_{T(2,j)} \wedge \cdots \wedge A e_{T(\mu_j,j)} \\
    & \quad = \sum_{S} \prod_{j = 1}^{\lambda_1} A_{S(1,j),T(1,j)} e_{S(1,j)} \wedge  A_{S(2,j),T(2,j)} e_{S(2,j)} \wedge \cdots \wedge A_{S(\mu_j,j),T(\mu_j,j) } e_{S(\mu_j,j)},
    \end{align*}    
    where the sum runs over all tableaux $S$. Using multilinearity, we obtain    \begin{align*}
    \rho(A) e_T &= \sum_{S} A_{S,T} \left( \prod_{j = 1}^{\lambda_1}  e_{S(1,j)} \wedge  e_{S(2,j)} \wedge \cdots \wedge e_{S(\mu_j,j)} \right) = \sum_{S} A_{S,T} e_S,  
\end{align*}
where we define
\[
A_{S,T} = \prod_{j = 1}^{\lambda_1} \prod_{i=1}^{\mu_j} A_{S(i,j),T(i,j)}. 
\]
Let $T_1, \dots, T_m$ be the list of all semistandard tableaux on the Young diagram associated to $\lambda$. The dimension $m$ is given by (Theorem 6.3 of~\cite{fulton91})
\begin{equation}\label{eq:dimension}
m = \dim W = \prod_{1 \leq i < j \leq t} \frac{\lambda_i - \lambda_j + j - i}{j - i}.
\end{equation}
Equip  $W$ with the inner product $\langle \cdot, \cdot \rangle$ for which $e_{T_1},\ldots,e_{T_m}$ are orthonormal. For each tableau $S$ we then have
\[
e_S = \sum_{i = 1}^m \langle e_{T_i} , e_{S} \rangle e_{T_i}.
\]
The proof of Proposition 15.55 in~\cite{fulton91} may be used to give an algorithm to compute these numbers, which we describe below. The matrix coefficients of the representation $W$ are then given by
\begin{equation}\label{eq:matrixcoeffs}
\langle e_{T_i} , \rho(A) e_{T_j} \rangle =  \sum_S A_{S,T_j} \langle e_{T_i}, e_S \rangle.
\end{equation}

Let $T$ be a tableau on the Young diagram associated to $\lambda$ and $e_T$ the associated element of $W$. We encode $T$ as a list of vectors of integers, where the $i$th vector corresponds to the $i$th column of $T$. For instance the tableau 
\[
\ytableausetup{centertableaux}
\begin{ytableau}
a_1 & b_1 & c_1 \\
a_2 & b_2 & c_2 \\
a_3
\end{ytableau}
\]
will be encoded as $\{ (a_1,a_2,a_3),(b_1,b_2),(c_1,c_2) \}$. We order such tableaux using the reverse lexicographical ordering, i.e. $T' > T$ if the last entry in the last vector where $T'$ differs from $T$ is larger. 

For an integer $1 \leq s \leq \lambda_1$, let $(a_1,\dots,a_p)$ be the $s$-th column of $T$ and $(b_1,\dots,b_q)$ be the $(s+1)$-th column of $T$. For $1 \leq r \leq q$ and integers $1 \leq i_1 < \cdots < i_r \leq p$, let $\Phi(T,s,r,i_1,\ldots,i_r)$ be the tableau obtained from $T$ where we replace the first $r$ entries in column $s+1$ by $a_{i_1},\ldots,a_{i_r}$ and replace the $i_1,\ldots,i_r$ entries in column $s$ by $b_1,\ldots,b_r$. 
The algorithm below returns a vector with the coordinates of $e_T$ in the basis $e_{T_1}, \ldots, e_{T_m}$. The algorithm is recursive and terminates since at every call we  increase the position of the tableau in the reverse lexicographic order. The last step uses the quotient in the definition of $W$.

\begin{algorithm}
\caption{Algorithm to decompose a tableau into semistandard tableaux}\label{euclid}
\begin{algorithmic}
\Procedure{ssdecomp}{$T$}
\State  If there is a column in $T$ with two identical entries, return $0$.
\State Else, let $\sigma$ be the permutation of $T$ which orders each column to become strictly increasing and does not exchange elements between different columns.
\State Replace $T$ by $\sigma(T)$.
\State If $T$ is semistandard, return a vector with $\mathrm{sign}(\sigma)$ at the appropriate entry and zeros otherwise.
\State Else, find $r$ and $s$ such that $T(r,s) > T(r,s + 1)$.
\State Return $\mathrm{sign}(\sigma) \sum_{1 \leq i_1 < \dots < i_r \leq p} \text{\sc ssdecomp}(\Phi(T, s, r, i_1,\ldots,i_r))$.
\EndProcedure
\end{algorithmic}
\end{algorithm}

\section{The Gross-Kunze construction}

\label{sec:Gross-Kunze}
We see the group $\Ort{n-t}$ as a subgroup of $\Ort{n}$ by fixing the first $t$ coordinates. Given a representation $\pi \colon \Ort{n} \to \mathrm{GL}(V)$, the space of $\Ort{n-t}$-invariants is 
\[
V^{\Ort{n-t}} = \big\{v \in V \mid \pi(A) v = v \text{ for all } A \in \Ort{n-t}\big\}.
\]
In this section we give an explicit description of the $\Ort{n-t}$-invariant subspaces of the irreducible representations of $\Ort{n}$ for $2t < n$. For this we give an exposition of a construction by Gross and Kunze \cite{gross1984finite} and prove additional properties required in our setting. 

First, Gross and Kunze construct invariants in the case of the complex orthogonal group
\[
\Ort{n, \C} = \left\{ Q \in \C^{n \times n} \mid Q^{\sf T} Q = 1 \right\},
\]
whereas we are interested in the real orthogonal group $\Ort{n}$. This problem is resolved by complexifying. There is a natural inclusion $\Ort{n} \subseteq \Ort{n,\C}$ and $\Ort{n,\C}$ is the complexification of $\Ort{n}$, which means that for any smooth homomorphism $\alpha$ from $\Ort{n}$ to a complex Lie group $G$, there is a unique holomorphic homomorphism $\alpha_\C$ from $\Ort{n,\C}$ to $G$ with $\alpha(A) = \alpha_\C(A)$ for all $A \in \Ort{n}$; see, e.g., \cite[Chapter 15]{MR3025417}. 

Let $V$ be a finite dimensional, complex vector space. We consider a representation of $\Ort{n}$ on $V$ to be a smooth group homomorphism from $\Ort{n}$ to $\mathrm{GL}(V)$ and a representation of $\Ort{n,\C}$ on $V$ to be a holomorphic group homomorphism from $\Ort{n,\C}$ to $\mathrm{GL}(V)$. Holomorphic maps are in particular smooth and $\Ort{n}$ is a smooth submanifold of $\Ort{n,\C}$. Hence, the restriction of a representation of $\Ort{n,\C}$ to $\Ort{n}$ is a representation of $\Ort{n}$. By the definition of the complexification and setting $G = \mathrm{GL}(V)$, we see that any representation $\pi$ of $\Ort{n}$ is the restriction of a unique representation $\pi_\C$ of $\Ort{n,\C}$.

We now show that complexification interacts well with invariant subspaces. Since $\Ort{n-t} \subseteq \Ort{n-t, \C}$ we have
\[
V^{\Ort{n-t,\C}} \subseteq V^{\Ort{n-t}}.
\]
For the other direction, we use polar decomposition. Any matrix $A \in \Ort{n,\C}$ can be written uniquely as
\[
A = U e^{i X}
\]
with $U$ in $\Ort{n}$ and $X$ in the Lie algebra $\mathfrak{o}(n)$ of $\Ort{n}$ consisting of the skew-symmetric matrices of size $n$;  see, e.g., \cite[Proposition~15.2.1]{MR3025417}. Since $\pi_\C$ is a homomorphism we have
\[
\pi_\C(U e^{iX}) = \pi_\C(U) \pi_\C(e^{iX}) = \pi_\C(U) e^{d\pi_\C(iX)} =  \pi_\C(U) e^{id\pi_\C(X)} = \pi(U) e^{i d\pi(X)},
\]
where $d\pi_\C \colon \mathfrak o(n, \C) \to \mathfrak{gl}(n)$ and $d\pi \colon \mathfrak o(n) \to \mathfrak{gl}(n)$ are the differentials of $\pi_\C$ and~$\pi$. Here we used that $d\pi_\C$ is complex linear since $\pi_\C$ is holomorphic.
Now let $v$ be a vector in $V$ invariant under $\Ort{n-t}$. 
Then $d\pi(X)v = 0$ for any $X$ in the Lie algebra $\mathfrak o(n-t)$. By the above, for $A \in \Ort{n-t, \C}$ there are $U \in \Ort{n-t}$ and $X \in \mathfrak{o}(n-t)$ such that $A = Ue^{iX}$. Hence we have
\[
\pi_\C(A) v = \pi(U) e^{i d\pi(X)} v = \pi(U) (I + id\pi(X) + \frac{1}{2} (id\pi(X))^2 + \ldots) v = \pi(U) v = v,
\]
which shows 
\[
V^{\Ort{n-t}} \subseteq V^{\Ort{n-t, \C}}.
\]
This shows the invariant subspaces agree in the real and complex case.

The $\Ort{n-t, \C}$-invariant subspaces of the irreducible representations of $\Ort{n,\C}$ are described by Gross and Kunze in \cite{gross1984finite}. The irreducible representations are induced by representations of the general linear group. Let $(\rho,W)$ be an irreducible, polynomial representation of $\GL{t}{}$. Let $\omega$ be the complex $t \times n$ matrix
\[
\omega = \begin{pmatrix} I_t\ i I_t\ 0 \end{pmatrix},
\]
and let $\epsilon$ be the $n \times t$ matrix
\[
\epsilon = \begin{pmatrix} I_t \\ 0 \end{pmatrix}.
\]
For each $w \in W$, define a function $\phi(w) \colon \Ort{n,\C} \to W$ by
\begin{equation} \label{eq:defoforthrep1}
    \phi(w) (x) = \rho(\omega x \epsilon)w.
\end{equation}
We then define the vector space of right translates of such functions
\[
V = \spann \left\{ \pi_{\C}({\xi}) \phi(w) \mid \xi \in \Ort{n,\C},\, w \in W \right\},
\]
where $\pi_{\C}({\xi}) \phi(w) (x) = \phi(w) (x \xi)$. This space carries an action of $\Ort{n,\C}$ by right translation.       Because $h \epsilon = \epsilon$ for $h \in \Ort{n-t,\C}$, it follows that the subspace $\phi(W)$ is invariant under right translation by an element of $\Ort{n - t,\C}$. The following theorem is a special case of Theorem~7.6 in \cite{gross1984finite}. 

\begin{theorem}\label{thm:GK}
    The vector space $V$ with right translation is a finite dimensional, holomorphic representation of $\Ort{n,\C}$. Moreover, the space $V^{\Ort{n - t,\C}}$ is precisely $\phi(W)$. Every holomorphic irreducible representation of $\Ort{n,\C}$ with non-trivial $\Ort{n - t,\C}$-invariants is of this form for a unique polynomial representation $(\rho,W)$ of $\GL{t}$.
\end{theorem}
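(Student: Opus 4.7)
The plan is to prove the three claims in sequence, with the main difficulty being the identification $V^{\Ort{n-t,\C}} = \phi(W)$.

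For finite-dimensionality and holomorphicity, I would argue by direct inspection. Since $\rho$ is a polynomial representation of $\GL{t}$ of some degree $d$ and the entries of $\omega x \epsilon$ are linear in those of $x$, the function $\phi(w)(x) = \rho(\omega x \epsilon) w$ is a $W$-valued polynomial in the entries of $x$ of total degree at most $d$. Right translation $(\pi_\C(\xi) \phi(w))(x) = \rho(\omega x \xi \epsilon) w$ preserves the degree in $x$, so $V$ embeds into the finite-dimensional space of $W$-valued polynomials of degree at most $d$ on $\C^{n \times n}$. The representation is holomorphic because $(\pi_\C(\eta) f)(x) = f(x \eta)$ is polynomial, hence holomorphic, in $\eta$.

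For the identification, the inclusion $\phi(W) \subseteq V^{\Ort{n-t,\C}}$ is the observation $h \epsilon = \epsilon$ for $h \in \Ort{n-t,\C}$ already noted in the excerpt. For the reverse inclusion I would introduce the evaluation map $\Psi \colon V \to W$ defined by $\Psi(f) = f(I_n)$. A direct computation using $\omega \epsilon = I_t$ gives $\Psi(\phi(w)) = \rho(I_t) w = w$, so $\phi$ is injective with left inverse $\Psi$. The claim is that every $f \in V^{\Ort{n-t,\C}}$ satisfies $f = \phi(\Psi(f))$. Such an $f$ descends to a polynomial function on the complex Stiefel manifold $\{y \in \C^{n \times t} : y^{\sf T} y = I_t\} \cong \Ort{n,\C}/\Ort{n-t,\C}$, and $\phi(w)$ likewise factors through this quotient, depending only on the $t \times t$ matrix $\omega y$ where $y = x \epsilon$. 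I would write a general invariant as an $\Ort{n-t}$-average of a linear combination of right translates of elements of $\phi(W)$ (legitimate because invariants for $\Ort{n-t}$ and $\Ort{n-t,\C}$ coincide, as shown in the discussion preceding the theorem) and then show the averaged expression is again of the form $\phi(w')$ by using that the map $y \mapsto \omega y$ from the Stiefel manifold into $\C^{t \times t}$ has Zariski-dense image and that $\rho$ is a polynomial map. This averaging-and-identification step is the main obstacle and is where Theorem~7.6 of \cite{gross1984finite} is used in its full strength.

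For the classification, $V$ is irreducible: it is generated as an $\Ort{n,\C}$-module by $\phi(W) = V^{\Ort{n-t,\C}}$ by construction, and any nonzero invariant subspace meets $V^{\Ort{n-t,\C}}$ nontrivially, corresponding under the already-established identification to a nonzero $\GL{t}$-subrepresentation of $W$, and hence to all of $\phi(W)$ by irreducibility of $(\rho, W)$. Conversely, given an irreducible holomorphic representation $U$ of $\Ort{n,\C}$ with nontrivial $\Ort{n-t,\C}$-invariants, the invariant subspace carries a natural $\GL{t}$-action; choosing an irreducible subrepresentation and applying the $V$-construction yields a surjection onto $U$ that is an isomorphism by irreducibility. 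Uniqueness of $(\rho, W)$ up to isomorphism then follows because $W$ is recovered canonically as $V^{\Ort{n-t,\C}}$ together with its $\GL{t}$-action.
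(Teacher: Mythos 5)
The paper does not prove this statement at all: it is quoted verbatim as ``a special case of Theorem~7.6 in \cite{gross1984finite}'', so there is no internal proof to compare against. Your proposal goes further than the paper by actually arguing the easy parts, and those parts are sound: the polynomial-degree bound giving finite-dimensionality and holomorphicity, the inclusion $\phi(W) \subseteq V^{\Ort{n-t,\C}}$ from $h\epsilon = \epsilon$, and the injectivity of $\phi$ via evaluation at $I_n$ (using $\omega\epsilon = I_t$) are all correct. Since you, like the paper, ultimately invoke Gross--Kunze for the crux, the overall logical structure is acceptable.

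However, the sketch you give of that crux would not close as written, and it is worth being precise about why. With $\pi_\C(\xi)f(x) = f(x\xi)$, the projection of $\pi_\C(\xi)\phi(w)$ onto the $\Ort{n-t}$-invariants is $x \mapsto \int_{\Ort{n-t}} \rho(\omega x h \xi \epsilon)w\,dh$; the averaging variable $h$ sits \emph{between} $x$ and $\xi$, not adjacent to $\epsilon$, so writing $\xi\epsilon = \binom{A}{B}$ the integrand is $\rho\bigl(\omega x \binom{A}{h'B}\bigr)w$. Showing this average equals $\rho(\omega x\epsilon)w'$ for a single $w' \in W$ is exactly the Poisson-integral identity of Gross and Kunze; Zariski density of $y \mapsto \omega y$ and polynomiality of $\rho$ do not by themselves produce it. Similarly, your irreducibility argument presupposes that a $\pi_\C$-invariant decomposition of $V$ induces a $\GL{t}$-equivariant decomposition of $V^{\Ort{n-t,\C}} \cong W$, which requires first constructing the $\GL{t}$-action on the invariant subspace (again part of what the cited theorem supplies). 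So your write-up is an honest reduction to Theorem~7.6 of \cite{gross1984finite} --- consistent with the paper's own treatment --- but the intermediate heuristics should not be presented as if they constitute a proof of the reverse inclusion or of irreducibility.
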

Restricting to $\Ort{n}$ gives a full description of the invariant subspaces in the real case. Henceforth, when we speak of $V$ we shall mean the representation of $\Ort{n}$ obtained by restricting the representation of $\Ort{n,\mathbb{C}}$ constructed in Theorem \ref{thm:GK}. 

We define on $V$ the inner product
\[
\langle f, g \rangle = \int_{\Ort{n}} \langle f(x), g(x) \rangle \, dx,
\]
where in the integrand any inner product on $W$ is used. By standard properties of the Haar measure, this makes the representation of $\Ort{n}$ unitary. By uniqueness of such inner products, this shows that the above integral is independent of the choice of inner product on $W$ up to multiplication by a positive real number. We now show that in the semistandard tableaux basis, the matrix coefficients of $\pi$ are real. This allows us to restrict to real matrices in the semidefinite program.

\begin{proposition} \label{prop:reality}
The numbers $\langle \phi(e_{T_i}), \pi(y) \phi(e_{T_j}) \rangle$ are real for all $y \in \Ort{n}$.
\end{proposition}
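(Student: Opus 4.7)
The plan is to construct an antilinear involution $\Theta$ on the ambient space of continuous $W$-valued functions on $\Ort{n}$ that is antiunitary for the $L^2$-inner product defining $V$ and that fixes each vector $\pi(y)\phi(e_{T_i})$ for $y \in \Ort{n}$. Granted such a $\Theta$, realness of the matrix coefficient is immediate:
\[
\overline{\langle \phi(e_{T_i}),\, \pi(y)\phi(e_{T_j})\rangle} = \langle \Theta \phi(e_{T_i}),\, \Theta \pi(y)\phi(e_{T_j})\rangle = \langle \phi(e_{T_i}),\, \pi(y)\phi(e_{T_j})\rangle.
\]

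The first ingredient is a conjugation on $W$. From the formula \eqref{eq:matrixcoeffs} and the integer-valued output of Algorithm~\ref{euclid}, the matrix coefficients of $\rho$ in the semistandard tableaux basis $e_{T_1},\ldots,e_{T_m}$ are polynomials with integer coefficients in the entries of the argument matrix. Equip $W$ with the inner product making these $e_{T_k}$ orthonormal (permitted by the remark preceding the statement, since the $V$-inner product depends on the choice on $W$ only up to a positive scalar). Let $C \colon W \to W$ be the antilinear map given by complex conjugation of coordinates in this basis. Then $\langle Cu, Cv\rangle_W = \overline{\langle u, v\rangle_W}$, $C e_{T_i} = e_{T_i}$ for all $i$, and the real-coefficient property of the matrix elements of $\rho$ yields the intertwining identity $\rho(\bar A) = C\, \rho(A)\, C$ for every complex matrix $A$.

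The second ingredient is a real involution on $\R^n$ that interacts correctly with $\omega$. Take
\[
J = \mathrm{diag}(I_t,\, -I_t,\, I_{n-2t}) \in \Ort{n},
\]
which is well defined since $2t < n$. A direct block computation gives $\omega J = \bar\omega$ (and incidentally $J\epsilon = \epsilon$). Define
\[
(\Theta f)(x) = C\, f(Jx).
\]
To verify that $\Theta$ fixes $\pi(y)\phi(e_{T_j})$ for $y \in \Ort{n}$, compute for $x \in \Ort{n}$
\[
(\Theta \pi(y)\phi(e_{T_j}))(x) = C\, \rho(\omega J x y \epsilon)\, e_{T_j} = C\, \rho(\bar\omega x y \epsilon)\, e_{T_j} = C^2 \rho(\omega x y \epsilon)\, C\, e_{T_j} = \rho(\omega x y \epsilon)\, e_{T_j},
\]
using $\bar\omega x y \epsilon = \overline{\omega x y \epsilon}$ (since $x,y,\epsilon$ are real) and $\rho(\bar A) = C\rho(A)C$. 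Antiunitarity of $\Theta$ follows by combining the pointwise identity $\langle Cu, Cv\rangle_W = \overline{\langle u, v\rangle_W}$ with left-invariance of the Haar measure under $x \mapsto Jx$.

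The only nonroutine step is finding the element $J$ with $\omega J = \bar\omega$; once $J$ is written down, the remaining verifications are bookkeeping. Conceptually, this $J$ is what lets us trade complex conjugation of $\omega$ for a (real) change of integration variable, turning $\overline{\langle f,g\rangle}$ back into $\langle f,g\rangle$ for our invariant vectors.
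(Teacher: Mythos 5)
Your proof is correct and follows essentially the same route as the paper: the same conjugation on $W$ in the semistandard tableaux basis, an orthogonal matrix with $\omega J=\bar\omega$ (the paper uses $\eta=\mathrm{diag}(I_t,-I_{n-t})$, which acts identically on the columns where $\omega$ is nonzero), and Haar-measure invariance under $x\mapsto Jx$. Packaging these ingredients as an antiunitary involution $\Theta$ fixing the vectors $\pi(y)\phi(e_{T_j})$ is only a cosmetic reorganization of the paper's direct manipulation of the integral.
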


\begin{proof}
     We may define a conjugation $\overline{(\cdot)}$ on $W$ by conjugating the components of a vector in the semistandard tableaux basis. Let $\eta$ be the orthogonal matrix $ \mathrm{diag} (I_t,-I_{n - t})$. We then have $\omega \eta = ( I_t \, - i I_t \, 0 )$. Given the fact that the matrix coefficients of the representation $\rho$ in the semistandard tableaux basis are polynomials with real coefficients, we then have 
     \[
     \rho(\omega \eta x y \epsilon) \overline{v} = \overline{\rho(\omega x y \epsilon) v}
     \]
     for all $x, y \in \Ort{n}$ and all $v \in W$. Hence we have
     \begin{align*}
          \langle \phi(e_{T_i}), \pi(y) \phi(e_{T_j}) \rangle 
         &= \int_{\Ort{n}} \langle \rho(\omega  x \epsilon) e_{T_i}, \rho(\omega x y \epsilon) e_{T_j} \rangle\, dx \\
         & = \int_{\Ort{n}} \langle \rho(\omega  \eta x \epsilon) e_{T_i}, \rho(\omega \eta x y \epsilon) e_{T_j} \rangle\, dx \\
         & = \int_{\Ort{n}} \langle \overline{ \rho(\omega  x \epsilon) e_{T_i}}, \overline{\rho(\omega x y \epsilon) e_{T_j}} \rangle\, dx \\
         & = \overline{\langle \phi(e_{T_i}), \pi(y) \phi(e_{T_j})\rangle}.\qedhere
     \end{align*}
\end{proof}

\section{Invariants under the stabiliser subgroups}\label{sec:stabinv}

Recall that $\mathcal I_t$ is the set of subsets of $S^{n-1}$ of size at most $t$ with inner products in the finite subset $D$ of $[-1, 1)$. We fix a representative $R_i$ for each orbit $X_i$ of the action of $\Ort{n}$ on $\mathcal I_t$ and let $H_i$ be the stabilizer subgroup of $\Ort{n}$ with respect to $R_i$. In Section~\ref{sec:Gross-Kunze} we give a construction of the irreducible representations $V$ of $\Ort{n}$ indexed by tuples $\lambda$, and we give a basis $\phi(e_{T_1}),\ldots,\phi(e_{T_m})$ of $V^{\Ort{n-t}}$, where $T$ ranges over the semistandard tableaux on the Young diagram associated to $\lambda$ containing entries from $1$ to $t$. In this section we show how we can use this to construct bases $e_{\pi,i,1},\ldots,e_{\pi,i,d_{\pi, i}}$ of the spaces $V^{H_i}$ as needed in \eqref{eq:kernelfourier}.

We can choose the representatives $R_i$ to lie in the span of the first $t_i = |R_i|$ coordinates. We assume for notational simplicity that the vectors in $R_i$ are linearly independent, which is true for all applications considered in this paper. We think of $\Ort{n-t_i}$ as acting on the last $n-t_i$ coordinates, so that 
\[
H_i = S(R_i) \oplus \Ort{n-t_i},
\]
where $S(R_i)$ is a finite group of orthogonal transformations that act in the first $t_i$ coordinates and act on $R_i$ by permuting elements. We will abuse notation and think of these transformations as $t_i \times t_i$ matrices, knowing that they act as the identity on the last $n-t_i$ coordinates. We have
\[
V^{H_i} = \big( V \vspace{0mm} ^{\Ort{n - t_i}} \big)  \vspace{0mm} ^{S(R_i)}
\]
and we may first construct a basis of $V^{\Ort{n-t_i}}$ and then a basis of the $S(R_i)$-invariants inside $V^{\Ort{n-t_i}}$.

Given the definitions of the representations $\rho$ and~$\pi$ of $\GL{t}$ and $\Ort{n}$ and recalling $t_i \leq t$, we have
\begin{equation}\label{eq:pirho-unfold}
(\pi(\xi) \phi(e_T)) (x) = \rho(\omega x \xi \epsilon)e_T = \sum_{S}  \prod_{j = 1}^{\lambda_1} \prod_{k=1}^{\mu_j} (\omega x \xi \epsilon)_{S(k,j),T(k,j)}  e_S,
\end{equation}
where the sum runs over all tableaux $S$ with entries from $1$ to $t$ on the Young diagram associated to the partition $\lambda$. This shows we have $\pi(\xi) \phi(e_T) = \phi(e_T)$ for all $\xi \in \Ort{n-t_i}$ if the tableau $T$ only has entries ranging from $1$ to $t_i$, and thus the vectors $\phi(e_T)$ lie inside $V^{\Ort{n-t_i}}$ as we range over such $T$. As before, the subspace $V^{\Ort{n-t_i}}$ has dimension equal to the corresponding representation of $\GL{t_i}$. Hence a basis of $V^{\Ort{n-t_i}}$ is given by the set of $\phi(e_T)$, where $T$ ranges over semistandard tableaux with entries from $1$ to $t_i$. Henceforth we enumerate the tableaux in such a way that the first $m_i$ semistandard tableaux give a basis for $V^{\Ort{n-t_i}}$ and the bases for the spaces $V^{\Ort{n}} \subseteq V^{\Ort{n-1}} \subseteq \dots \subseteq V^{\Ort{n-t}}$ are nested.

To use the basis of $V^{\Ort{n-t_i}}$ to construct a basis for $V^{H_i}$ we use the following procedure. First, we construct the matrices generating the groups $S(R_i)$. For this we fix an ordering of the vectors in $R_i$ and denote also by $R_i$ the $n \times t_i$ matrix with these vectors as columns. The Gram matrix $R_i^{\sf T}R_i$ of $R_i$ is a matrix with ones in the diagonal and elements from $D$ elsewhere. Given a permutation $\sigma$ of the columns, denote by $P_\sigma$ the corresponding $t_i \times t_i$ permutation matrix. If $P_\sigma^{\sf T} R_i^{\sf T}R_iP_\sigma = R_i^{\sf T}R_i$, then there exists a corresponding $r_\sigma \in S(R_i)$ given by
\[
r_\sigma =  R_i P_\sigma (R_i^{\sf T}R_i)^{-1} R_i^{\sf T}.
 \]
and we observe that $r_\sigma$ satisfies $r_\sigma R_i = R_i P_\sigma$. To obtain the generators for the group $S(R_i)$, we take the generators $\sigma$ of the symmetric group on $t_i$ elements which  satisfy $P_\sigma^{\sf T} R_i^{\sf T}R_iP_\sigma = R_i^{\sf T}R_i$ and take the corresponding matrices $r_\sigma$.

To obtain a basis for the space $V^{H_i}$ we look for linear combinations of the vectors $\phi(e_{T_j})$ with $1 \leq j \leq m_i$ such that  
$\pi(r_\sigma) \sum_{j} c_j \phi(e_{T_j}) = \sum_{j} c_j \phi(e_{T_j})$
and in order to do so we compute a basis for the solution space of the system
\begin{equation}\label{eq:kernel-invariant}
\sum_{j=1}^{m_i}\big\langle \phi(e_{T_k}),\  (\pi(r_\sigma) - I) \phi(e_{T_j}) \big\rangle c_j  = 0
\end{equation}
for all $1 \leq k \leq m_i$ and all generators $r_\sigma$ of $S(R_i)$. For each basis element $c$ we get the basis element $\sum_j c_j \phi(e_{T_j})$ of $V^{H_i}$. 

Let us look at an example. Let $t = 2$ and consider an orbit $X$ such that the elements of $X$ have cardinality two. Let $r$ be the $n \times n$ orthogonal matrix  which maps $e_2 $ to $ -e_2$ and fixes the orthogonal complement of $e_2$. We may choose a representative $R = \{ v_1,v_2\}$ of $X$ such that $rv_1 = v_2$ and $rv_2 = v_1$.  Then the group $S(R)$ is the group of two elements $\{ I , r\}$. Unwinding definitions with \eqref{eq:pirho-unfold} and using
\[
\rho(\omega x r \epsilon) = \rho(\omega x \epsilon )\rho \left(
\begin{bmatrix}
1 & 0 \\
0 & -1 \\
\end{bmatrix}  
\right),
\]
we have $\pi(r)\phi(e_T) = \phi(e_T)$ if there is an even number of twos in the tableau $T$ and $\pi(r)\phi(e_T) = - \phi(e_T)$ if there is an odd number of twos in $T$. Then $H = S(R) \oplus \Ort{n-2}$ is the stabiliser of $R$. A basis of $V^{H}$ is given by the set of those $\phi(e_T)$ for which $T$ is a semistandard tableau with an even number of twos.

\section{Semidefinite programming formulation}
\label{sec:sdp}

In this section we give the semidefinite programming formulation. We also discuss a projective formulation for the case of bounding $N_\alpha(n)$, and we give implementation details.

To solve \eqref{pr:las} using semidefinite programming we parametrize the positive kernels using \eqref{eq:kernelfourier}. To do this explicitly we need to compute inner products of the form
\[
\big\langle \pi(s(J_1)) e_{\pi,i(J_1),j_1}, \pi(s(J_2)) e_{\pi,i(J_2),j_2} \big\rangle.
\]
Here $V$ is the irreducible representation of $\Ort{n}$ as constructed in Section~\ref{sec:Gross-Kunze}, and $\{e_{\pi,i,j}\}_j$ is the basis of $V^{H_i}$ as constructed in Section~\ref{sec:stabinv}, where $H_i$ is the stabilizer subgroup of $\Ort{n}$ with respect to the orbit representative $R_i$ of the orbit $X_i$. Recall that $s$ is a function $\mathcal I_t \to \Ort{n}$ such that $s(J)R_{i(J)} = J$ for all $J \in \mathcal I_t$, and define $S = s(J_1)^{-1} s(J_2)$. Then the above inner product is equal to
\[
\big\langle e_{\pi,i_1,j_1}, \pi(S) e_{\pi,i_2,j_2} \big\rangle,
\]
where $i_1 = i(J_1)$ and $i_2 = i(J_2)$. 

As before, we view $\Ort{n - t_{i_1}} \subseteq \Ort{n}$ as a subgroup by extending with an identity block $I_{t_{i_1}}$ in the top left corner and similarly for $\Ort{n - t_{i_2}} \subseteq \Ort{n}$. If $S$ and $S'$ are orthogonal matrices of which the top left $t_{i_1} \times t_{i_2}$ blocks agree, then there exist matrices $h_{i_1} \in \Ort{n - t_{i_1}}$ and $h_{i_2} \in \Ort{n - t_{i_2}}$ such that $h_{i_{1}}Sh_{i_{2}} = S'$. This follows from the following fact: If two tuples of vectors $(v_1,v_2,\dots), (w_1,w_2,\dots)$ with $v_i, w_i \in \mathbb{R}^m$ satisfy $v_i \cdot v_j = w_i \cdot w_j$ for all $i$ and $j$, then there exists an $h \in \Ort{m}$ such that $hv_i = w_i$. By the invariance properties of the basis, we then have 
\[
\big\langle e_{\pi,i_1,j_1}, \pi(S) e_{\pi,i_2,j_2} \big\rangle = \big\langle e_{\pi,i_1,j_1}, \pi(S') e_{\pi,i_2,j_2} \big\rangle.
\]
Hence to calculate $\big\langle e_{\pi,i_1,j_1}, \pi(S) e_{\pi,i_2,j_2} \big\rangle$, one may replace $S$ by any orthogonal matrix $S'$ with the same top left $t_{i_1} \times t_{i_2}$ block.

To find the top left $t_{i_1} \times t_{i_2}$ block of $S$, we first compute $s(J_1)$. Fix an ordering of the vectors in $R_{i_1}$ and $J_1$, and denote also by $R_{i_1}$ and $J_1$ the matrices with these vectors as their columns. Let $A = R_{i_1}^{\sf T}R_{i_1}$ and let $P$ be a permutation matrix for which $A = P^{\sf T} J_1^{\sf T} J_1 P$. Let $Q$ be an orthogonal matrix with the first $t_{i_1}$ columns given by the corresponding columns of
$
J_1 P A^{-1} R_{i_1}^{\sf T}.
$
The representative $R_{i_1}$ is chosen to be supported in the first $t_{i_1}$ coordinates, so we have $Q R_{i_1} = J_1P$, and we can define $s(J_1) = Q$. The matrix $s(J_2)$ is computed similarly. Then the top-left $t_{i_1} \times t_{i_2}$ block of $S = s(J_1)^{-1} s(J_2)$ is given by the corresponding block of 
\begin{equation} \label{eq:t1timest2blockofS}
(J_1 P_{i_1} A_{i_1}^{-1} R_{i_1}^{\sf T})^{\sf T} J_2 P_{i_2} A_{i_2}^{-1} R_{i_2}^{\sf T}.
\end{equation}
In the implementation we construct $S \in O(n)$ as a $2t \times t$ matrix with orthonormal columns of which the top-left $t_{i_1} \times t_{i_2}$ block agrees with the corresponding block of~\eqref{eq:t1timest2blockofS}.

We now define $w_{\pi,i,j}$ by $\phi(w_{\pi,i,j}) = e_{\pi,i,j}$ and evaluate the inner product as follows: 
\begin{align*}
    &\big\langle e_{\pi,i_1,j_1}, \pi(S) e_{\pi,i_2,j_2} \big\rangle\\
    &\quad = \int_{\Ort{n}} \big\langle \phi(w_{\pi,i_1,j_1})(x),  \phi(w_{\pi,i_2,j_2})(xS)\big\rangle \, dx\\
    &\quad = \int_{\Ort{n}} \big\langle \rho(\omega x \epsilon) w_{\pi,i_1,j_1},  \rho (\omega x S \epsilon) w_{\pi,i_2,j_2} \big\rangle\, dx \\
    &\quad = \sum_k \int_{\Ort{n}} \big\langle \rho(\omega x \epsilon) w_{\pi,i_1,j_1}, e_{T_k} \big\rangle \big\langle e_{T_k}, \rho (\omega x S \epsilon) w_{\pi,i_2,j_2} \big\rangle\, dx,
\end{align*}
where the matrix coefficients in the integrand of the last integral can be computed using \eqref{eq:matrixcoeffs}.
Notice that the integrand is a polynomial in the top-left $2t \times 2t$-part of $x$. By using the recursive approach from \cite{gorinlopez08} for evaluating the integral of a monomial over $\Ort{n}$ we can compute the inner product $\langle e_{\pi,i_1,j_1}, \pi(S) e_{\pi,i_2,j_2} \big\rangle$ explicitly as a polynomial in the entries of $S$ where the coefficients are rational functions in the dimension $n$. This shows how we can express the kernel $K$ from \eqref{eq:kernelfourier} explicitly in terms of the matrices $F^\pi$ defining the kernel.

By fixing $d$ and considering representations $\pi$ with $|\lambda| \leq d$ we can now generate the following semidefinite program, where the entries of the constraint matrices are explicitly computed rational functions in the dimensions $n$ over the ground field of the algebraic numbers:
\begin{mini}
{}{K(\emptyset, \emptyset)}{}{}
\label{pr:lassdp}
\addConstraint{F^{\pi} \succeq 0,}{}{\pi = \pi_\lambda, \lambda = (\lambda_1,\ldots,\lambda_t), |\lambda| \leq d}
\addConstraint{A_t K(R) \leq -1_{\mathcal{I}_{=1}}(R),\qquad}{}{R \in \mathcal R_{2t}.}
\end{mini}
In the program, $\mathcal R_{2t}$ is a set containing a representative of each orbit in $\mathcal I_{2t} \setminus \{\emptyset\}$.

To bound $N_\alpha(n)$ we consider the spherical finite distance problem with $D = \{\pm\alpha\}$. In this case we can alternatively obtain upper bounds by considering the graph on the projective space $\mathbb{RP}^{n-1}$ with $[x] \sim [y]$ if $x\cdot y = \alpha$ or $x \cdot y = -\alpha$. This leads to a reduction in the number of variables and constraints. The kernels are defined in the same way as in (\ref{eq:kernelfourier}), but there are some minor differences. Firstly, the invariant space $V^{H_i}$ is unchanged if $|\lambda|$ is even, but vanishes if $|\lambda|$ is odd. This follows from the fact that $-I$ stabilizes every element in the projective space and the formula
\[
\pi(-I) \phi(v)(x)=\rho(- \omega x \epsilon)v = (-1)^{|\lambda|} \rho(\omega x \epsilon)v = (-1)^{|\lambda|} \phi(v)(x) \, .
\]
Secondly, there are fewer orbits, since now we consider two Gram matrices as representatives of the same orbit not only if they are equal up to simultaneous permutations of rows and columns, but also if they are equal up to simultaneous multiplication by $-1$ of a subset of rows and columns. Both of these changes lead to a smaller semidefinite program.   In contrast to the nonprojective version, the resulting semidefinite programs exist over the rational numbers for $t=2$ and the odd integers $a$ we tried. We have observed empirically that for $t = 2$ the projective and nonprojective bounds are the same, but that for $t=3$ and $d > 4$ there exist cases where they are different. 

We wrote the software to set up the semidefinite programs in Julia~\cite{bezanson17} using the Nemo~\cite{fieker17} computer algebra system, where we use Calcium \cite{MR4398788} for exact arithmetic with algebraic numbers.
For solving the semidefinite programs we use the solver SDPA-GMP~\cite{nakata10}. Setting up the semidefinite programs, especially for the third level of the hierarchy, is computationally demanding. We pregenerate the part of the zonal matrices not depending on the dimension $n$, which is fast for $t=2, 3$ and $d\leq 4$, but takes about a week for the $t=3$ and $d=5$ case on a single core of a modern computer. After that, the whole process of setting up and solving the semidefinite programs takes a few seconds for $\las_2$ using degree $4$ polynomials and a few hours for $\las_3$ using degree $5$ polynomials. For the case $t=3$ and $d \ge 4$ the algebraic numbers become complicated which means setting up the semidefinite programs in such a way that the entries of the constraint matrices are rational functions in $n$ is no longer feasible. Hence, for these cases we only generate the semidefinite programs for fixed dimensions. The source code and data files for the proofs can be found in the ancillary files of the arXiv version of this paper, including documentation on how to use the code.

\section{New bounds in fixed dimensions}
\label{sec:applications}

\subsection{Improved bounds on $N_\alpha(n)$ }
\label{sec:equiangular-lines}

There has been extensive research into the determination of $N_\alpha(n)$. As mentioned in the introduction, usually only inner products $\alpha = 1/a$ with $a$ an odd natural number are considered, since these are the only cases for which there may exist equiangular line configurations of size larger than $2n$ \cite{lemmens73}, and the determination of $N_\alpha(n)$ for these values of $\alpha$ solves the equiangular lines problem without fixed angle in dimension $n$ \cite{larman77}.
In \cite{lemmens73, neuimaier89, cao21} the equations
\[
N_{1/3}(n) = \begin{cases} 28 & \text{for } 7 \leq n \leq 14\text{, and}\\
2(n-1) & \text{for } n \geq 15\end{cases}
\]
and
\[
N_{1/5}(n) = \begin{cases} 276 & \text{for } 23 \leq n \leq 185\text{, and}\\
\lfloor \frac{3}{2}(n-1)\rfloor & \text{for } n \geq 185,\end{cases}
\]
are shown, but for $a \geq 7$ less is known (see, e.g., \cite[Appendix A]{kaoyu}). As mentioned in the introduction, the correct values are known for very large dimensions and there exist general upper and lower bounds.

\begin{figure}
\input{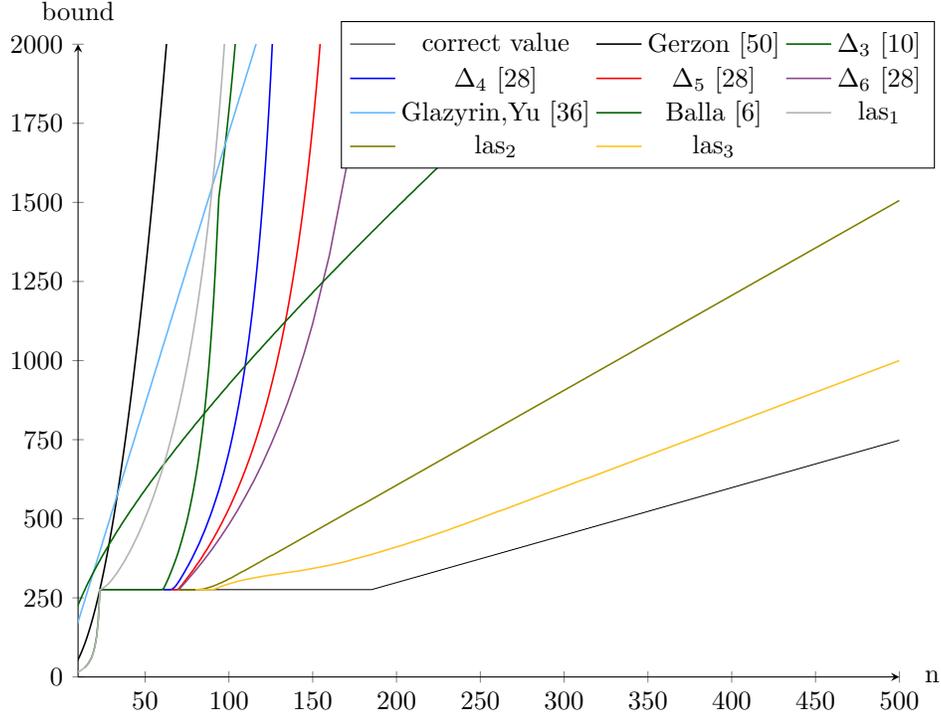}
\caption{Bounds for $N_{1/5}(n)$.}
\label{fig:a5}
\end{figure}

\begin{figure}
\begin{center}
\input{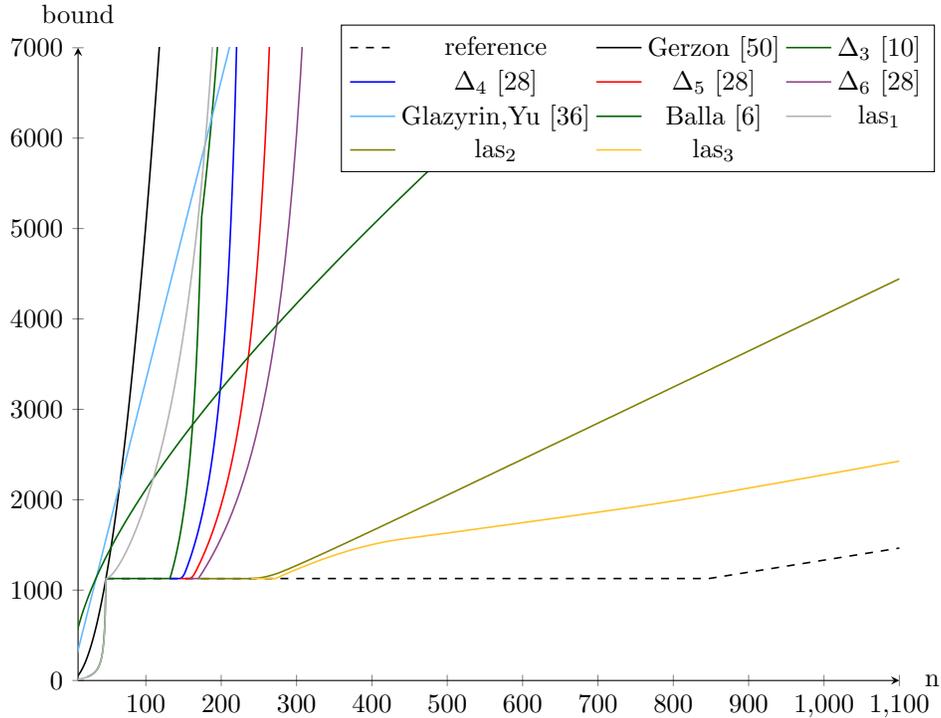}
\end{center}
\caption{Bounds for $N_{1/7}(n)$. The dashed line extends \eqref{eq:stable} until dimension \eqref{eq:dimlinearcon}, from which it continues with the construction of ${\lfloor (n-1)(a+1)/(a-1)\rfloor}$ lines.}
\label{fig:a7}
\end{figure}

A fundamental result in the linear/semidefinite programming approach is the Delsarte, Goethals, and Seidel~\cite{delsarte77} linear programming bound. Since this bound takes into account constraints between pairs of points, it is called a $2$-point bound which we denote by $\Delta_2$. In \cite{bachoc08, barg14} this is generalized to a $3$-point bound $\Delta_3$ and computed for the equiangular lines problem, and in \cite{musin14,deLaat2021} this is generalized to a $k$-point bound $\Delta_k$ and computed for $k=4,5,6$. As mentioned in the introduction we have $\Delta_2 = \las_1$. The bounds $\las_2$ and $\las_3$ considered in this paper thus provide an alternative generalization of the linear programming bound. As shown in Figure~\ref{fig:a7}, for $a = 7$ and for many dimensions the bounds $\las_2$ and $\las_3$ are much stronger than any of the previous bounds coming from semidefinite programming or elsewhere. Similar results hold for larger values of $a$, but we do not include the plots because they look qualitatively similar.

One common feature of $\Delta_k$ for $k = 3,4,5,6$ is that starting at $n = a^2 - 2$ they stabilize and produce the constant bound
\begin{equation}\label{eq:stable}
\frac{(a^2-2)(a^2-1)}{2}
\end{equation}
on $N_{1/a}(n)$ until a certain dimension $D_k(a)$. For $a = 5$ there is an exceptional configuration related to the Leech lattice of $(a^2-2)(a^2-1)/2 = 276$ points in dimension $a^2-2$ \cite{conway69,lemmens73}. This configuration stays optimal until a construction of ${\lfloor (n-1)(a+1)/(a-1)\rfloor}$ equiangular lines in dimension $n$ (see \cite{bukh16}) matches \eqref{eq:stable}, which happens in dimension
\begin{equation}
\label{eq:dimlinearcon}
\frac{(a^2-2)(a-1)^2}{2} + 1.
\end{equation}
The general situation is different, however, since it is known that a configuration of $(a^2-2)(a^2-1)/2$ lines in dimension $a^2-2$ cannot exist for a number of values of $a$ starting at $a=7$ \cite{bannai04, nebe12}. In fact, in \cite{kaoyu} it is shown that for those values of $a$ there is no configuration of $(a^2-2)(a^2-1)/2$ equiangular lines in dimensions $a^2-2 \leq n \leq D_4(a)$.

It is therefore of interest to find better semidefinite programming bounds on $N_{1/a}(a^2 - 2)$ and to increase the range of dimensions for which we know that \eqref{eq:stable} gives an upper bound on $N_{1/a}(n)$. As can be seen in Table~\ref{table:new-ranges}, the bounds $\las_2$ and $\las_3$ are equal to \eqref{eq:stable} for a significantly larger range of dimensions. Furthermore, we have obtained an example where $\las_3$ improves over $\las_2$ and $\las_1$ and hence the linear programming bound for a dimension lower than $a^2 - 2$. For $a = 7$ and $n = 13$ the bound improves from $17$ to $16$. 

In \cite{yu17, kaoyu} explicit quadratic expressions are given for $D_3(a)$ and $D_4(a)$. Based on the data in Table~\ref{table:new-ranges} we conjecture that for $\las_2$ and $\las_3$ the corresponding expressions are  cubic instead of quadratic. Recall that \eqref{eq:dimlinearcon} is quartic in $a$.

\begin{table}
\begin{tabular}{@{}ccccccccc@{}}
\toprule
$a$ & $\Delta_2=\las_1$ & $\Delta_3$ & $\Delta_4$ & 
$\Delta_5$ & $\Delta_6$ & $\mathrm{las}_2$ & $\mathrm{las}_3$ & \text{intersection}\\
\midrule
$5$ & $23$ & $60$ & $65$ & $69$ & $70$ & $82$ $(80)$  & $90$ $(89)$ & $185$\\
$7$ & $47$ &  $131$ & $145$ & $158$ & $169$  & $243$ $(239)$ & $272$ $(271)$ & $847$\\
$9$ & $79$ & $227$ & $251$ & $273$ & $300$ & $535$ $(530)$ & $610$ $(610)$ & $2529$ \\
$11$ & $119$ & $347$ & $381$ & $413$ & $448$ & $1000$ $(993)$ & $1152$ $(1152)$ & $5951$ \\
$13$ & $167$ && & & & $1676$ $(1668)$ & $1946$ $(1946)$ & $12025$ \\
$15$ & $223$ && & & & $2604$ $(2595)$ & $3040$ $(3040)$ & $21855$ \\
$17$ & $287$ && & & & $3823$ $(3813)$ & $4483$ $(4483)$ & $36737$ \\
$19$ & $359$ && & & & $5374$ $(5362)$ & $6321$ $(6321)$ & $58159$ \\
$21$ & $439$ && & & & $7294$ $(7281)$ & $8603$ $(8603)$ & $87801$ \\
$23$ & $527$ && & & & $9626$ $(9611)$ & $11377$ $(11377)$ & $127535$ \\
$25$ & $623$ && & & & $12407$ $(12391)$ & $14692$ $(14692)$ & $179425$ \\
\bottomrule
\end{tabular}
\bigskip
\caption{The largest dimension~$n$ for which the respective bounds can be used to show \eqref{eq:stable} holds. In parentheses we list the largest dimension for which the bound is exactly equal to $(a^2-2)(a^2-1)/2$. The bound $\Delta_3$ is computed in~\cite{barg14, king16} and the bounds $\Delta_4$, $\Delta_5$, $\Delta_6$ are computed in~\cite{deLaat2021}. The column labeled `intersection' shows the dimension from \eqref{eq:dimlinearcon} for reference.}
\label{table:new-ranges}
\end{table}

\subsection{Bounds for more general distance sets}
\label{sec:quasi-unbiased}

In this section we discuss some applications of $\las_2$ to problems where the allowed distance set $D$ is not of the form $\{\pm \alpha\}$. For some of these computations the cardinality of $D$ is $3$ as opposed to $2$. Here nothing changes in the formulation of the bounds, but the number of orbits to consider increases greatly and $\las_3$ becomes too hard to compute. For instance, with inner products $\{1/7, -1/7\}$  there are $156$ orbits with sets of size $6$, while with inner products $\{1/7, -1/7, 0\}$ there are $25506$ such orbits. For completeness we also mention applications we tried where we did not get new results. 

The first application we consider is to a problem with sets of matrices having orthogonal rows. Following~\cite{araya17, kao22+}, a Hadamard matrix of order $n$ is a $(+1,-1)$-valued $n \times n$ matrix $H$  such that $HH^{\sf T} = n I$, and a weighing matrix of order $n$ and weight $k$ is a $(+1,-1,0)$-valued $n \times n$ matrix such that $WW^{\sf T}=k I$. Two Hadamard matrices $H_1$ and $H_2$ of order $n$ are said to be quasi-unbiased for parameters $(l,a)$ if $a^{-1/2}H_1 H_2^{\sf T}$ is a weighing matrix of weight $l$. Note that necessarily $l = n^2/a$.
Hadamard matrices $H_1,\ldots,H_f$ of order $n$ are said to be quasi-unbiased for parameters $(l,a)$ if they are pairwise quasi-unbiased for those parameters. 
Table 1 of Araya, Harada, and Suda~\cite{araya17} has a list of the possible parameters for quasi-unbiased Hadamard matrices of order up to $48$ together with bounds for the maximum size of these sets. Kao, Suda, and Yu~\cite{kao22+} applied the semidefinite programming bound $\Delta_3$ to derive bounds based on an observation that normalizing the rows of a set of $f$ quasi-unbiased Hadamard matrices for parameters $(l, a)$ gives a spherical $3$-distance set in $S^{n-1}$ with $|X| = nf$ and inner products $\{\pm l^{-1/2}, 0\}$. 
We apply $\las_2$ using polynomial representations of degree $|\lambda| \leq 8$ to give the bounds as listed in Table~\ref{table:QUHM}.

\begin{table}
\begin{tabular}{@{}ccccccc@{}} 
\toprule
$n$ & $l$ & $a$ & Lower bound from~\cite{araya17} & Upper bound from~\cite{araya17} & $\Delta_3$~\cite{kao22+} & $\mathrm{las}_2$\\ 
\midrule 
$16$ & $4$ & $64$  & $8$ & $35$ & $15$  & $15$ \\  
$24$ & $4$ & $144$ & $2$ & $85$ & $25$ & $23$ \\ 
$24$ & $9$ & $64$  & $16$ & $85$ & $95$ & $95$ \\
$32$ & $4$ & $256$ &  $8$ & $155$ & $47$ & $31$ \\
$36$ & $9$ & $144$ & -& $199$ & $79$ & $ 67$ \\
$40$ & $4$ & $400$ & -& $247$ & $101$ & $39$ \\ 
$40$ & $25$& $64$ & -& $28$ & $30$ & $30$ \\ 
$48$ & $4$ & $576$ & $2$ & $361$ & $276$ & $47$ \\ 
$48$ & $9$ & $256$ & $16$ & $361$ & $104$ & $96$ \\ 
$48$ & $16$ & $144$ & -& $361$ &  $316$ & $316$ \\ 
$48$ & $36$ & $64$ & $2$ & $28$ &  $30$ & $30$ \\ 
\bottomrule
\end{tabular}
\bigskip
\caption{Improved bounds on the maximum number of quasi-unbiased Hadamard matrices of order $n$ with parameters $(l,a)$.}
\label{table:QUHM}
\end{table}

We applied our bound to a problem involving ``Q-antipodal Q-polynomial schemes with 3 classes'' as described by Martin, Muzychuk, Williford~\cite{martin07}, but we observed that $\las_2$ produces the same results as listed in the table in~\cite{kao22+}.

Another application of bounds for $3$-distance sets is to the maximum size of a $3$-distance set in $S^{n-1}$ for any possible choice of angles \cite{musin11,szollosi20}. Here a theorem by Nozaki~\cite{nozaki11} is used so that for each inner product $d_3$ only finitely many other inner products $d_1$ and $d_2$ need to be considered. By using sums-of-squares techniques the bounds $\Delta_2 = \las_1$ and $\Delta_3$ can be applied \cite{musin11,szollosi20,lin20+}. We applied $\las_2$ to this problem, and did get improvements for some parameters, but overall could not get new results because like $\Delta_2$ and $\Delta_3$, $\las_2$ becomes unbounded as $d_3 \to 1$.

Finally, we tried to disprove the existence of certain strongly regular graphs; see Cameron~\cite[Chapter 8]{beineke04} for an introduction to these graphs and also the more complete monograph by Brouwer and Van Maldeghem~\cite{brouwer22}. The existence of a strongly regular graph with a given set of parameters implies the existence of a spherical $2$-distance set in a certain dimension and with certain inner products; see Theorem~5.1 in~\cite[Chapter 8]{beineke04}.
We applied $\las_3$ with $|\lambda| \leq 5$ to all sets of parameters listed as open in~\cite[Chapter 12]{brouwer22} with at most $200$ vertices, without getting a new result.

\section{Asymptotic analysis of the bounds}
\label{sec:asymptotics}

In this section we explain how we obtain a computer generated and verified proof of Conjecture~\ref{conj:las2} for $a = 3,5,7,9,11$. 

First we observe empirically that the degree of the required representations does not grow. In fact, for large dimensions, we only use the representations with $\lambda = (0,0), (3,1), (4,0)$. Though it is not necessary for our proof, we have the following conjecture:

\begin{conjecture}\label{con:lambdas}
The optimal value of $\mathrm{las}_2$ for bounding $N_{\alpha}(n)$ can be obtained by using only the representations with $\lambda = (0,0), (2,0), (3,1), (4,0)$. Moreover, for dimensions beyond the stable range the representation with $\lambda = (2,0)$ is not needed.
\end{conjecture}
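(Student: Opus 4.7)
My plan is to prove the conjecture via a reduction argument showing that in the semidefinite program \eqref{pr:lassdp} for $\las_2$, the matrices $F^{\pi_\lambda}$ with $\lambda \notin \{(0,0), (2,0), (3,1), (4,0)\}$ can be set to zero without increasing the optimum. Since adding more representations can only lower the minimum, the conjecture is equivalent to the assertion that at some optimal primal solution the blocks for "irrelevant" partitions vanish. First, I would use the projective reformulation from Section~\ref{sec:sdp} to eliminate all representations with $|\lambda|$ odd; in the range $|\lambda|\leq 4$ for $t=2$, this already leaves us with $\lambda\in\{(0,0),(2,0),(1,1),(3,1),(4,0),(2,2)\}$, so the work is to discard $(1,1)$ and $(2,2)$ asymptotically, and additionally $(2,0)$ beyond the stable range.

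Next, I would analyze each discarded block individually. For each such $\lambda$, the contribution of $F^{\pi_\lambda}$ to the kernel $K(J_1,J_2)$ in \eqref{eq:kernelfourier} is a positive semidefinite combination of matrix coefficients
$\bigl\langle e_{\pi,i_1,j_1},\, \pi(S)\,e_{\pi,i_2,j_2}\bigr\rangle$,
and by the computation following \eqref{eq:t1timest2blockofS} each of these is a polynomial in the top-left block of $S$ whose coefficients are rational functions in $n$. I would compute these polynomials explicitly for each orbit pair $(R_{i_1},R_{i_2})$ using the Gross--Kunze construction, then show that replacing $F^{\pi_\lambda}\succeq 0$ by $0$ only \emph{relaxes} the inequalities $A_2 K(R)\leq -\mathbf{1}_{\mathcal{I}_{=1}}(R)$ at every orbit representative~$R\in\mathcal R_4$. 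Equivalently, one must certify that the "zonal block" of each excluded representation, viewed as a function on the finite list of orbits, is entrywise nonnegative. Because there are only finitely many orbit representatives for $t=2$, this reduces to a finite (but large) set of polynomial inequalities in $n$ and in the entries of the Gram matrices.

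The hardest step will be establishing these sign patterns uniformly in $n$ for the excluded representations. Two ingredients should help: (i) in the asymptotic regime of Section~\ref{sec:moreasymptotics}, the zonal matrix coefficients admit an expansion in $1/n$, so it suffices to check the leading term and bound the remainder, which looks tractable for $\lambda=(1,1)$ and $(2,2)$ since these encode higher-moment information that decouples from the linear asymptotic slope $\tfrac{1+\alpha}{2\alpha}$; (ii) for the stable range, where $(2,0)$ is genuinely active because the sporadic extremal configurations (Leech, $E_8$, etc.) have nontrivial projection onto the $(2,0)$-isotypic component, one can still argue that outside this range the optimal kernel lies in the span corresponding to the asymptotic configuration of $\lfloor(n-1)(a+1)/(a-1)\rfloor$ lines, whose Gram matrix lives in a restricted representation spectrum. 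I expect step~(i) to amount to a finite verification once the polynomial expressions are in hand, while step~(ii) requires identifying precisely the dimension threshold past which the symmetric $(2,0)$-mode becomes slack---this is the main conceptual obstacle and likely parallels, on the representation-theoretic side, the transition between the stable and linear regimes already visible in Table~\ref{table:new-ranges}.

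A complementary and possibly cleaner route is dual: construct an explicit dual-feasible certificate whose support on the primal side only charges the four representations. Since the dual of \eqref{pr:lassdp} involves nonnegative multipliers $\mu_R$ on the orbit constraints plus a sum-of-squares condition in each irreducible block, it would suffice to exhibit $\mu_R\geq 0$ and PSD matrices $F^{\pi_\lambda}$ for $\lambda\in\{(0,0),(2,0),(3,1),(4,0)\}$ that solve the dual to the same value as the full primal. The numerical optimizer output should strongly constrain what these certificates look like, and rounding it to an algebraic solution---mirroring the methodology already used in Section~\ref{sec:asymptotics} to prove Conjecture~\ref{conj:las2}---seems to me the most realistic path to a rigorous proof.
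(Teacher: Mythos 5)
The statement you are addressing is presented in the paper as a genuine conjecture: the authors offer only the empirical observation that, in their numerically computed optimal solutions, these are the only active representations, and they explicitly say the claim is not needed for (and is not established by) their results. There is therefore no proof in the paper to compare against, and your text is a research plan rather than a proof; the question is whether the plan could work. Note also that your very first step, discarding all odd $|\lambda|$ via the projective reformulation, itself rests on an equivalence between the projective and nonprojective bounds that the paper only reports as an empirical observation for $t=2$.

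The central gap is in your second paragraph. To zero out a block $F^{\pi_\lambda}$ while preserving feasibility, you require that the contribution of that block to $A_2K(R)$ be nonnegative at \emph{every} orbit representative $R$ and for \emph{every} positive semidefinite $F^{\pi_\lambda}$. That is an entrywise sign condition on the zonal matrices of $\pi_\lambda$, which is much stronger than what the conjecture asserts and is almost certainly false: positivity of the kernel only forces nonnegativity of the sums $\sum_{J,J'\subseteq C}K(J,J')$ over independent sets $C$, not of the individual values $A_2K(R)$. Moreover, a representation with this property would be dispensable in every instance, whereas the conjecture itself says $(2,0)$ is needed inside the stable range and not outside it; dispensability of a block is a complementary-slackness statement relative to a dual optimal solution, i.e.\ a global property of the particular SDP instance, not an intrinsic sign property of the representation. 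Your dual route is the plausible one (and mirrors the rounding methodology of Section~\ref{sec:asymptotics}), but it faces the second gap: the conjecture quantifies over \emph{all} irreducible representations of $\Ort{n}$ with nontrivial $\Ort{n-2}$-invariants, so a dual certificate must be checked against the dual constraint of every $\lambda$, an infinite family, and uniformly in $n$. Your plan only ever discusses discarding $(1,1)$ and $(2,2)$ within $|\lambda|\le 4$, so even if both steps were carried out it would prove only that the truncation at degree $4$ needs no more than the four listed representations, not that the full $\las_2$ optimum is attained by them.
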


As numerical evidence for Conjecture~\ref{conj:las2} we observed that the function $\las_2(n)$ can be expanded in terms of $n, 1, n^{-1}, n^{-2}, \dots$, and through interpolation we find the first few coefficients in this expansions for many values of $\alpha$. We found that the first expansion coefficient satisfies the formula $(1 + \alpha)/(2\alpha)$. For $\alpha = 1/5$, the expansion of $\las_{2}(n)$ seems to be particularly well-behaved having rational coefficients:
\begin{equation}\label{eqn:las2expansion}
3n + 6 + \frac{120}{n} + \frac{5530}{n^2} +  \frac{1449485}{3n^3} + \frac{2961283225}{72n^4} + O\left(\frac{1}{n^5}\right).
\end{equation}

To prove Conjecture~\ref{conj:las2} for a given value of $\alpha$ we construct, for each sufficiently large $n$, a feasible solution to the semidefinite program $\mathrm{las}_2(n)$ such that the corresponding sequence of objective values is linear in $n$ with slope $(1 + \alpha)/(2\alpha)$. For this we consider the perturbed hierarchy $\mathrm{las}_{2,4}(n)$, where we subtract $1/n^4$ from the right hand side of each inequality constraint in \eqref{pr:lassdp} and force each eigenvalue of each block matrix to be at least $1/n^4$. Let $\{F^{\lambda}(n)\}_\lambda$ be the optimal solution of $\mathrm{las}_{2,4}(n)$ lying on the central path of the interior-point method. We now make the ansatz that there exist matrices $A^{\lambda,k}$, whose entries are algebraic numbers of low degree and reasonable bit size, such that 
\begin{equation}\label{eq:expansion}
F^{\lambda}(n)_{(i_1,j_1),(i_2, j_2)} = \sum_{k=0}^\infty A_{(i_1,j_1), (i_2,j_2)}^{\lambda,k} n^{1+\lambda_1+2\lambda_2  - t_{i_1} - t_{i_2}-k},
\end{equation}
where, as before, $t_i$ is the cardinality of the orbit representative $R_i$.
Then we use the interior-point solver to numerically compute a near optimal solution approximately on the central path of $\mathrm{las}_{2,4}(n)$ for dimensions $N, N+1, \dots, N+L$, and we use this to compute approximations of the coefficient matrices $A^{\lambda,0},\ldots,A^{\lambda,l-1}$ via interpolation.  We then use the LLL algorithm to find the entries of the coefficient matrices exactly as algebraic numbers, and denote by $\mathrm{sol}(n)$ the solution whose matrices are given by the truncation of \eqref{eq:expansion} using these $l$ rounded coefficient matrices. 

We found good values for the parameters $l$, $L$, and $N$ through experimentation. If the dimension $N_\alpha$, beyond which the solutions are feasible, is to be made small, the number of terms $l$ in the truncation should not be too small and not too large. Perhaps this is due to Runge's phenomenon. After that, $N$ and $L$ should be chosen such that we  find $A^{\lambda,0},\ldots,A^{\lambda,l-1}$ in sufficiently high precision so that we can round them correctly. For the results presented in this paper we use $N = 10^{100}$ and the parameters $l$ and $L$ listed in Table~\ref{table:asymptoticparameters}.

\begin{table}
\begin{tabular}{@{}cccccccc@{}} 
\toprule
$\alpha$ & $f_\alpha(n)$ & $N_\alpha$ & $l$ & $L$ & $N_{\alpha,4}$ & $N_{\alpha,5}$\\
\midrule 
$1/3$ & $2n+4$ & $500$ & $9$ & $12$ & $17$ & $12$  \\
$1/5$ & $3n+30$ & $2235$ & $10$ & $15$ & $253$ & $87$\\
$1/7$ & $4n+116$ & $13739$ & $9$ & $11$  & $4638$ & $261$\\
$1/9$ & $5n+316$ & $166018$ & $9$ & $12$ \\
$1/11$ & $6n+699$ & $751307$ & $9$ & $12$ \\
\bottomrule
\end{tabular}
\bigskip
\caption{Parameters used to obtain the interpolations.}
\label{table:asymptoticparameters}
\end{table}

Next, we verify that $\mathrm{sol}(n)$ is a solution for $\mathrm{las}_2(n)$. Since the entries of both $\mathrm{las}_2(n)$ and $\mathrm{sol}(n)$ are exact rational functions in $n$, we can compute the slack in the inequality constraints of the solution $\mathrm{sol(n)}$ to $\mathrm{las}_2(n)$ as exact rational functions in $n$ (where a positive slack means the inequality constraint is satisfied strictly). We then fix an integer $n_\alpha$ and evaluate these rational functions at $n_\alpha+n$. We then verify that the coefficients of the numerator and denominator polynomials are all positive, which proves the slacks are positive for $n \geq n_\alpha$, and hence the inequality constraints are satisfied for all $n \geq n_\alpha$. Then we compute the determinants of the leading principal submatrices, evaluate these rational functions in $n_\alpha+n$, and check that all coefficients of the numerator and denominator polynomials are positive, which proves the solution matrices are positive semidefinite for all $n \geq n_\alpha$. Finally, we check that the objective function is linear in $n$ with slope $(\alpha +1)/(2\alpha)$, which gives a computer verified proof of Conjecture~\ref{conj:las2} for this value of $\alpha$. Note that although floating point computations are used to obtain the proofs, the verification procedure is implemented entirely in exact arithmetic.

To make the value $n_\alpha$, beyond which we can prove our bound holds, as small as possible we solve finitely many semidefinite programs in fixed dimensions. We only do this for the values $a = 3,5,7$, but in principle it could be done for $a = 9,11$ too. First, we solve $\las_{2,4}(n)$ or $\las_{2,5}(n)$. Then, we approximate the floating point solution by a rational solution and we check in exact arithmetic whether the rounded solution is feasible for $\las_2(n)$ and has objective below $f_\alpha(n)$. The reason we use two different perturbations is that $\las_{2,4}(n)$ does not give good enough bounds in low dimensions and it is too difficult to find a feasible solution for $\las_{2,5}(n)$ in high dimensions. We use $\las_{2,4}(n)$ for $N_{\alpha,4} \leq n < N_\alpha$ and $\las_{2,5}(n)$ for $N_{\alpha,5} \leq n \leq N_{\alpha, 4}$. 

For $\alpha = 1/7$, for example, the interpolation procedure shows that
\[
\las_2(n) \leq 4n + a_1 + a_2 n^{-1} + \dots + a_8 n^{-7} \quad \text{for all} \quad n \geq 13739,
\]
for certain explicitly given $a_1,\ldots,a_8 \in \mathbb Q[\sqrt{2}]$. From this we can then derive that $N_{1/7}(n) \leq 4n+116$ for all $n \geq 13739$. Next we solve finitely many semidefinite programs to decrease the dimension $N_\alpha = 13739$ to $n_\alpha = 261$. In Table~\ref{table:asymptotic} we list the bounds obtained with this approach. Note that the same approach works, in principle, for other values of $\alpha$ not listed in the table, but we did not perform these computations.

For $t=3$ we seem to get asymptotically linear bounds with a better slope than with $t=2$. However, since computing the third level of the hierarchy for $d > 5$ is currently too computationally demanding we do not have the equivalent of Conjecture~\ref{con:lambdas}. The bounds might very well improve as we increase $d$ beyond $5$, and we do not know  the slope of the asymptotically linear behavior as the degree $d$ goes to infinity. In Table~\ref{table:las3} we give the numerically computed slopes for $d=4$ and $d=5$. 

\begin{table}
\begin{center}{
\begin{tabular}{@{}ccccccccccc@{}} 
\toprule
$a$ & $\tfrac{a+1}{2}$ & $d=4$ & $d=5$ & $\tfrac{a+1}{a-1}$ & & $a$ & $\tfrac{a+1}{2}$ & $d=4$ & $d=5$ & $\tfrac{a+1}{a-1}$  \\
\cmidrule{1-5} \cmidrule{7-11}
$5$ & $3$ & $2.000$ & $2.000$ & $1.500$ & & 
$19$ & $10$ & $6.948$ & $4.156$ & $1.111$\\
$7$ & $4$ & $2.003$ & $2.003$ & $1.333$ & & 
$21$ & $11$ & $7.975$ & $4.773$ & $1.100$\\
$9$ & $5$ & $2.428$ & $2.065$ & $1.250$ & & 
$23$ & $12$ & $9.018$ & $5.428$ & $1.091$\\
$11$ & $6$ & $3.171$ & $2.268$ & $1.200$ & & 
$25$ & $13$ & $10.071$ & $6.117$ & $1.083$\\
$13$ & $7$ & $4.038$ & $2.617$ & $1.167$ & & 
$27$ & $14$ & $11.130$ & $6.836$ & $1.077$\\
$15$ & $8$ & $4.968$ & $3.066$ & $1.143$ & & 
$29$ & $15$ & $12.193$ & $7.583$ & $1.071$\\
$17$ & $9$ & $5.943$ & $3.585$ & $1.125$ & &
$31$ & $16$ & $13.258$ & $8.354$ & $1.067$\\
\bottomrule
\end{tabular}}
\end{center}
\bigskip
\caption{Approximate slopes of $\las_3$ for degrees $d=4$ and $d=5$ and inner products $\alpha=1/a$ together with the slope $(a+1)/2$ given by $\las_2$ and the correct asymptotic slope $(a+1)/(a-1)$ proven by~\cite{MR4334975}.}
\label{table:las3}
\end{table}

\section{The limit semidefinite program}
\label{sec:moreasymptotics}

In this section we give a formulation for the limit semidefinite program. To do so we prove a fact about the asymptotic behavior of the Gross-Kunze construction. 

Let $\langle u, v\rangle_{\U{t}}$ be the unique (up to positive scalars) inner product on $W$ such that the restriction of $\rho$ to the compact group $\U{t}$ is a unitary representation. The following conjecture describes the asymptotic behavior of the inner product defined in Section~\ref{sec:Gross-Kunze}.

\begin{conjecture} \label{conj:zonalofid_for_large_n}
For each $\lambda$ there exists a strictly positive scalar $c$ such that 
\[
n^{|\lambda|} \langle \phi(u), \phi(v) \rangle = c  \langle u, v \rangle_{\U{t}} + O(n^{-1})
\]
for all $u, v \in W$.
\end{conjecture}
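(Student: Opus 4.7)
The plan is to recognize the integral defining $\langle \phi(u),\phi(v)\rangle$ as the expectation of a polynomial in the entries of a Haar-random orthogonal matrix, and to pass to the Gaussian limit where the answer becomes explicit.

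\emph{Step 1 (rescaling).} Choose $\langle\cdot,\cdot\rangle_{\U{t}}$ as the inner product on $W$; by the uniqueness discussion in Section~\ref{sec:Gross-Kunze}, this only affects the statement by an admissible positive scalar absorbed into $c$. Since $\rho$ is a polynomial representation of homogeneous degree $|\lambda|$ on $\C^{t\times t}$, we have $\rho(\sqrt{n}M)=n^{|\lambda|/2}\rho(M)$ for every $t\times t$ matrix $M$, hence
\[
n^{|\lambda|}\langle \phi(u),\phi(v)\rangle \;=\; \int_{\Ort{n}} \big\langle \rho(\sqrt{n}\,\omega x\epsilon)u,\;\rho(\sqrt{n}\,\omega x\epsilon)v\big\rangle_{\U{t}}\,dx.
\]
The integrand is a polynomial of total degree $2|\lambda|$ in the rescaled entries $\sqrt{n}\,x_{ij}$ of the top-left $2t\times t$ block of $x$, and this is the object whose asymptotics we need to understand.

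\emph{Step 2 (Gaussian limit and identification of $c$).} A standard consequence of Weingarten calculus for $\Ort{n}$ (or of the Diaconis--Shahshahani/Gorin--L\'opez formulas used in the paper) is that polynomial moments of the rescaled entries $\sqrt{n}\,x_{ij}$, for indices in any fixed bounded range, converge as $n\to\infty$ to the moments of iid standard real Gaussians. Consequently $\sqrt{n}\,\omega x\epsilon$ converges in moments to a complex matrix $Z$ with entries $Z_{ij}=G_{ij}+iH_{ij}$, where $G_{ij},H_{ij}$ are iid $N(0,1)$. The limit is therefore
\[
B(u,v)\;:=\;\mathbb{E}\big[\langle \rho(Z)u,\rho(Z)v\rangle_{\U{t}}\big].
\]
The distribution of $Z$ is bi-invariant under $\U{t}\times\U{t}$ (the complex Ginibre ensemble), and $\rho(U)$ is unitary on $(W,\langle\cdot,\cdot\rangle_{\U{t}})$ for $U\in\U{t}$, so $B(\rho(U)u,\rho(U)v)=B(u,v)$. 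Since $W$ restricted to $\U{t}$ is irreducible (it is a highest weight module for $\GL{t}$), Schur's lemma gives $B=c\,\langle\cdot,\cdot\rangle_{\U{t}}$ for some $c\in\R$. Positivity $c>0$ follows from $B(u,u)=\mathbb{E}\|\rho(Z)u\|^2$: the map $Z\mapsto \rho(Z)u$ is polynomial and nonzero (it equals $u\neq0$ at $Z=I$), so it is a.s.\ nonzero with respect to the Gaussian density on $\C^{t\times t}$.

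\emph{Step 3 (rate of convergence).} This is the main obstacle and is what turns the above limit into an $O(n^{-1})$ expansion. Each monomial of even total degree $2k$ in the entries of $x\in\Ort{n}$ admits, via the orthogonal Weingarten function (Collins--\'Sniady), an exact evaluation as a rational function in $n$. Its asymptotic expansion in $1/n$ has leading term equal to the Gaussian pairing value $n^{-k}$ times a Wick-combinatorial coefficient, and the first correction is of order $n^{-k-1}$; odd-degree monomials integrate to zero. Applying this to each of the finitely many monomials appearing in the expansion of $\langle \rho(\sqrt{n}\omega x\epsilon)u,\rho(\sqrt{n}\omega x\epsilon)v\rangle_{\U{t}}$ (there are at most $(2t^2)^{2|\lambda|}$ of them, with coefficients depending only on $u,v,\lambda$), the $n^{-k}$ factors are absorbed by the $n^{|\lambda|}$ prefactor and every correction contributes $O(n^{-1})$. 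Summing yields
\[
n^{|\lambda|}\langle \phi(u),\phi(v)\rangle \;=\; c\,\langle u,v\rangle_{\U{t}} + O(n^{-1}),
\]
with implicit constant depending on $u,v,\lambda$ but not on $n$. The hard part is verifying the $n^{-1}$ (rather than merely $n^{-1/2}$) rate, which one can establish either by invoking the full expansion of the orthogonal Weingarten function or, more self-containedly, by tracking the rational dependence on $n$ in the Gorin--L\'opez recursion \cite{gorinlopez08} already used in Section~\ref{sec:sdp}, where each recursive step replaces a factor by an expression of the form $\tfrac{1}{n+\mathrm{const}}$ plus a polynomially smaller remainder.
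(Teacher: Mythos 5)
Your argument is correct in outline, and it takes a genuinely different --- and more general --- route than the paper. The paper does not prove this statement in full: it is left as a conjecture, verified only for $t=2,3$ and $|\lambda|\le 4$ by (i) characterizing $\langle\cdot,\cdot\rangle_{\U{t}}$ as the unique positive definite solution of the linear system $d\rho(X)^*M=-Md\rho(X)$, (ii) extracting the leading coefficient matrix $(c^0_{ij})$ from Banica's Weingarten expansion $\int_{\Ort{n}}x^a\,dx=n^{-|\lambda|}\sum_k H_k(a)n^{-k}$, and (iii) checking in exact arithmetic, case by case, that $(c^0_{ij})$ is a positive definite solution of that system. You share ingredient (ii) --- your Step 3 is exactly what the cited expansion provides, since the integrand is homogeneous of degree $2|\lambda|$ in the rescaled entries and the expansion proceeds in integer powers of $n^{-1}$ with leading term the Wick/Gaussian value --- but you replace the finite verification (iii) by a conceptual identification: the leading term is $\mathbb{E}\big[\langle\rho(Z)u,\rho(Z)v\rangle\big]$ for a complex Ginibre matrix $Z$, whose right $\U{t}$-invariance in law plus Schur's lemma (the restriction of $W$ to $\U{t}$ being irreducible) forces proportionality to $\langle\cdot,\cdot\rangle_{\U{t}}$, and strict positivity follows because the polynomial map $Z\mapsto\rho(Z)u$ is not identically zero. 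If written out carefully this proves the conjecture for all $t$ and $\lambda$, which is strictly more than the paper establishes.

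Two small points to tighten. First, your Step 1 reduction ("the choice of inner product on $W$ only changes $c$") is not immediate as stated: different choices yield invariant inner products on $V=V_n$ whose ratio could a priori depend on $n$. This is harmless because the reduction is unnecessary --- the invariance of the limiting form $B$ under $\U{t}$ comes from the right-invariance of the Ginibre law, not from any invariance of the inner product used inside the integral, so Steps 2--3 apply verbatim to whatever fixed inner product on $W$ appears in the definition of $\langle\phi(u),\phi(v)\rangle$. Second, the $O(n^{-1})$ (rather than $O(n^{-1/2})$) rate that you flag as the hard part is already supplied by the expansion the paper cites from Banica, so you can simply invoke it rather than re-deriving it from the Gorin--L\'opez recursion.
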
 

We prove this conjecture for $t = 2,3$ and $|\lambda| \leq 4$. First, we set up a semidefinite program for which the matrix $M$ defined by $M_{i,j} = \langle e_{T_i}, e_{T_j} \rangle_{\U{t}}$ is the unique (up to positive scalars) solution. Consider the differential 
\[
d\rho \colon \mathfrak{gl}(t) \to \mathfrak{gl}(W)
\]
of \eqref{eq:rhorep}, which is a representation of Lie algebras. Here $\mathfrak{gl}(t)$ and $\mathfrak{gl}(W)$ are the Lie algebras of all complex linear endomorphisms of $\C^t$ and $W$, respectively. We now give the matrix coefficients of $d\rho$. Because $d\rho$ is linear, it suffices to compute the matrix coefficients in the basis $E_{r,s} = e_r e_s^{\sf T}$. We have $E_{r,s} e_k = \delta_{s k} e_r$ and hence
\[
d\rho(E_{r,s}) e_T = \sum_{S \in D_{r,s}(T)} e_S,
\]
where the sum ranges over the set $D_{r,s}(T)$ of all tableaux $S$ which may be obtained from $T$ by changing exactly one $s$ to an $r$. 
The matrix coefficients of the representation are then given by
\[
\langle e_{T_i}, d \rho (E_{r, s}) e_{T_j} \rangle = \sum_{S\in D_{r,s}(T_j)} \langle e_{T_i}, e_S \rangle.
\]

By uniqueness of the inner product, $M$ is the unique (up to positive scalars) positive definite matrix satisfying the equation
\[
\rho(x)^* M \rho(x) = M
\]
for all $x \in U(t)$. By differentiating, this condition  implies
 \begin{equation} \label{eq:Mliealgcond}
     d\rho(X)^* M = - M d\rho(X)
 \end{equation}
 for all $X \in \dU{t}$. 
Using the exponential map and connectedness of the unitary group, this condition is sufficient too. By linearity it suffices to enforce this condition on a basis of  the skew-hermitian complex matrices $\dU{t}$. 
The explicit formula for the matrix coefficients of $d \rho$ and an explicit choice of basis of $\dU{t}$ give a practical way of checking whether a given matrix defines the inner product $\langle \cdot, \cdot \rangle_{U(t)}$. 

We now consider the asymptotic expansion of $n^{|\lambda|} \langle \phi(u), \phi(v) \rangle$. Let $a$ be a multi-index with $|a| = 2|\lambda|$ and let $x^a = \prod_{i,j=1}^n x_{ij}^{a_{ij}}$ be the corresponding monomial on $\Ort{n}$. For fixed $a$, Theorem 4.3 in \cite{banica2010orthogonal} gives the asymptotic expansion:
\[
\int_{\Ort{n}} x^a\, dx = n^{-|\lambda|}\sum_{k= 0}^\infty H_k(a)n^{-k},
\]
where $H_k(a)$ is a certain combinatorial quantity related to the Brauer algebra. This gives an expansion of the form
\[
\langle \phi(e_{T_i}), \phi(e_{T_j}) \rangle = n^{-|\lambda|} \sum_{k=0}^\infty c^k_{ij} n^{-k},
\]
for certain integers $c^k_{ij}$. We wrote code to calculate the quantity $H_0 (a)$ and hence the leading coefficient matrix $(c^0_{ij})$. For $t = 2,3$ and $|\lambda| \leq 4$, we verified in exact arithmetic that this matrix $(c^0_{ij})$ is a positive definite solution to the system (\ref{eq:Mliealgcond}).
As a sidenote, we suspect that this asymptotic expansion is related to the expansion (\ref{eqn:las2expansion}) and ultimately to our asymptotic solutions in Section~\ref{sec:asymptotics}.

We now give a formulation for the limit semidefinite program.  In our current set-up for $\textup{las}_t$, we use the Gross-Kunze construction with a representation of $\GL{t}{}$. However, we could have used the construction with $\GL{2t}{}$. In this case, we could still locate the $\Ort{n-t}$-invariants using the argument from Section \ref{sec:stabinv}. Using the Gross-Kunze construction with $2t$ is computationally more expensive, since there are more variables, but it has the following property. Let $S = \textup{diag}(s,I_{n - 2t})$ be an orthogonal matrix which is block-diagonal with the first block $s$ of size $2t \times 2t$. All matrices $S$ occurring in $\las_t$ can be chosen to be of this form. We then have
\begin{align*}
     \langle \phi(e_{T_i}), \pi(S) \phi(e_{T_j}) \rangle & = \int_{\Ort{n}} \langle \rho(\omega x \epsilon ) e_{T_i}, \rho(\omega x S \epsilon )e_{T_j} \rangle \,dx \\
    & = \int_{\Ort{n}} \langle \rho(\omega x \epsilon ) e_{T_i}, \rho(\omega x \epsilon s)e_{T_j} \rangle \,dx \\
    & = \sum_k \left( \int_{\Ort{n}} \langle \rho(\omega x \epsilon ) e_{T_i}, \rho(\omega x \epsilon ) e_{T_k} \rangle\, dx \right) \langle e_{T_k}, \rho(s) e_{T_j} \rangle \\
    & = \sum_k \langle \phi(e_{T_i}), \phi(e_{T_k}) \rangle  \langle e_{T_k}, \rho(s) e_{T_j} \rangle.
\end{align*}
In short, if $2t$ is used in the Gross-Kunze construction, then a separation of variables occurs where all dependence on $n$ is in the  $\langle \phi(e_{T_i}), \phi(e_{T_k}) \rangle$ term. Hence the limit semidefinite program as $n \to \infty$ can be written in terms of the inner product~$\langle \cdot, \cdot \rangle_{U(t)}$.

\section*{Acknowledgements}

We thank Christine Bachoc, Anurag Bishnoi, Henry Cohn, Nando Leijenhorst, Giulia Montagna, Fernando Oliveira, and Frank Vallentin for helpful discussions.

\providecommand{\bysame}{\leavevmode\hbox to3em{\hrulefill}\thinspace}
\providecommand{\MR}{\relax\ifhmode\unskip\space\fi MR }
\providecommand{\MRhref}[2]{%
  \href{http://www.ams.org/mathscinet-getitem?mr=#1}{#2}
}
\providecommand{\href}[2]{#2}


\begin{thebibliography}{10}

\bibitem{araya17}
M. Araya, M. Harada, and S. Suda, \emph{Quasi-unbiased {H}adamard matrices and
  weakly unbiased {H}adamard matrices: a coding-theoretic approach}, Math.
  Comp. \textbf{86} (2017), no.~304, 951--984. \MR{3584556}
  \doi{10.1090/mcom/3122}

\bibitem{bachoc09}
C. Bachoc, G. Nebe, F.~M. de~Oliveira~Filho, and F. Vallentin, \emph{Lower
  bounds for measurable chromatic numbers}, Geom. Funct. Anal. \textbf{19}
  (2009), no.~3, 645--661. \MR{2563765} \doi{10.1007/s00039-009-0013-7}

\bibitem{MR3341578}
C. Bachoc, A. Passuello, and A. Thiery, \emph{The density of sets avoiding
  distance 1 in {E}uclidean space}, Discrete Comput. Geom. \textbf{53} (2015),
  no.~4, 783--808. \MR{3341578} \doi{10.1007/s00454-015-9668-z}

\bibitem{bachoc08}
C. Bachoc and F. Vallentin, \emph{New upper bounds for kissing numbers from
  semidefinite programming}, J. Amer. Math. Soc. \textbf{21} (2008), no.~3,
  909--924. \MR{2393433} \doi{10.1090/S0894-0347-07-00589-9}

\bibitem{MR2469257}
\bysame, \emph{Optimality and uniqueness of the {$(4,10,1/6)$} spherical code},
  J. Combin. Theory Ser. A \textbf{116} (2009), no.~1, 195--204. \MR{2469257}
  \doi{10.1016/j.jcta.2008.05.001}

\bibitem{balla2021}
I. Balla, \emph{Equiangular lines and regular graphs}, 2021.
  \doi{10.48550/ARXIV.2110.15842}

\bibitem{balla18}
I. Balla, F. Dr\"axler, P. Keevash, and B. Sudakov, \emph{Equiangular lines and
  spherical codes in {E}uclidean space}, Invent. Math. \textbf{211} (2018),
  no.~1, 179--212. \MR{3742757}

\bibitem{banica2010orthogonal}
T. Banica, \emph{The orthogonal weingarten formula in compact form}, Letters in
  Mathematical Physics \textbf{91} (2010), no.~2, 105--118.

\bibitem{bannai04}
E. Bannai, A. Munemasa, and B. Venkov, \emph{The nonexistence of certain tight
  spherical designs}, Algebra i Analiz \textbf{16} (2004), no.~4, 1--23.
  \MR{2090848}

\bibitem{barg14}
A. Barg and W.-H. Yu, \emph{New bounds for equiangular lines}, Discrete
  geometry and algebraic combinatorics, Contemp. Math., vol. 625, Amer. Math.
  Soc., Providence, RI, 2014, pp.~111--121. \MR{3289408}
  \doi{10.1090/conm/625/12494}

\bibitem{beineke04}
L.~W. Beineke and R.~J. Wilson (eds.), \emph{Topics in algebraic graph theory},
  Encyclopedia of Mathematics and its Applications, vol. 102, Cambridge
  University Press, Cambridge, 2004. \MR{2125091}
  \doi{10.1017/CBO9780511529993}

\bibitem{bezanson17}
J. Bezanson, A. Edelman, S. Karpinski, and V.~B. Shah, \emph{Julia: a fresh
  approach to numerical computing}, SIAM Rev. \textbf{59} (2017), no.~1,
  65--98. \MR{3605826} \doi{10.1137/141000671}

\bibitem{bochner41}
S. Bochner, \emph{Hilbert distances and positive definite functions}, Ann. of
  Math. (2) \textbf{42} (1941), 647--656. \MR{0005782}

\bibitem{brouwer22}
A.~E. Brouwer and H. Van~Maldeghem, \emph{Strongly regular graphs},
  Encyclopedia of Mathematics and its Applications, vol. 182, Cambridge
  University Press, Cambridge, 2022. \MR{4350112} \doi{10.1017/9781009057226}

\bibitem{bukh16}
B. Bukh, \emph{Bounds on equiangular lines and on related spherical codes},
  SIAM J. Discrete Math. \textbf{30} (2016), no.~1, 549--554. \MR{3477753}

\bibitem{cao21}
M.-Y. Cao, J.~H. Koolen, Y.-C.~R. Lin, and W.-H. Yu, \emph{The
  {L}emmens-{S}eidel conjecture and forbidden subgraphs}, J. Combin. Theory
  Ser. A \textbf{185} (2022), Paper No. 105538, 28. \MR{4316717}
  \doi{10.1016/j.jcta.2021.105538}

\bibitem{MR4439455}
D. Castro-Silva, F.~M. de~Oliveira~Filho, L. Slot, and F. Vallentin, \emph{A
  recursive {L}ov\'{a}sz theta number for simplex-avoiding sets}, Proc. Amer.
  Math. Soc. \textbf{150} (2022), no.~8, 3307--3322. \MR{4439455}
  \doi{10.1090/proc/15940}

\bibitem{cohnlaatsalmon}
H. Cohn, D. de~Laat, and A. Salmon, \emph{Three-point bounds for sphere
  packing}, 2022. \doi{10.48550/ARXIV.2206.15373}

\bibitem{MR1973059}
H. Cohn and N. Elkies, \emph{New upper bounds on sphere packings. {I}}, Ann. of
  Math. (2) \textbf{157} (2003), no.~2, 689--714. \MR{1973059}
  \doi{10.4007/annals.2003.157.689}

\bibitem{cohn07}
H. Cohn and A. Kumar, \emph{Universally optimal distribution of points on
  spheres}, J. Amer. Math. Soc. \textbf{20} (2007), no.~1, 99--148.
  \MR{2257398} \doi{10.1090/S0894-0347-06-00546-7}

\bibitem{MR3664817}
H. Cohn, A. Kumar, S.~D. Miller, D. Radchenko, and M. Viazovska, \emph{The
  sphere packing problem in dimension 24}, Ann. of Math. (2) \textbf{185}
  (2017), no.~3, 1017--1033. \MR{3664817} \doi{10.4007/annals.2017.185.3.8}

\bibitem{cohnsalmon}
H. Cohn and A. Salmon, \emph{Sphere packing bounds via rescaling}, 2021.
  \doi{10.48550/ARXIV.2108.10936}

\bibitem{MR2947943}
H. Cohn and J. Woo, \emph{Three-point bounds for energy minimization}, J. Amer.
  Math. Soc. \textbf{25} (2012), no.~4, 929--958. \MR{2947943}
  \doi{10.1090/S0894-0347-2012-00737-1}

\bibitem{MR3229046}
H. Cohn and Y. Zhao, \emph{Sphere packing bounds via spherical codes}, Duke
  Math. J. \textbf{163} (2014), no.~10, 1965--2002. \MR{3229046}
  \doi{10.1215/00127094-2738857}

\bibitem{conway69}
J.~H. Conway, \emph{A characterisation of {L}eech's lattice}, Invent. Math.
  \textbf{7} (1969), 137--142. \MR{245518} \doi{10.1007/BF01389796}

\bibitem{laatthesis16}
D. de~Laat, \emph{Moment methods in extremal geometry}, Ph.D. thesis, Delft
  University of Technology, 2017.

\bibitem{laat16}
\bysame, \emph{Moment methods in energy minimization: new bounds for {R}iesz
  minimal energy problems}, Trans. Amer. Math. Soc. \textbf{373} (2020), no.~2,
  1407--1453. \MR{4068268} \doi{10.1090/tran/7976}

\bibitem{deLaat2021}
D. de~Laat, F.~C. Machado, F.~M. de~Oliveira~Filho, and F. Vallentin,
  \emph{{$k$}-point semidefinite programming bounds for equiangular lines},
  Math. Program. \textbf{194} (2022), no.~1-2, Ser. A, 533--567. \MR{4445463}
  \doi{10.1007/s10107-021-01638-x}

\bibitem{laat15}
D. de~Laat and F. Vallentin, \emph{A semidefinite programming hierarchy for
  packing problems in discrete geometry}, Math. Program. \textbf{151} (2015),
  no.~2, Ser. B, 529--553. \MR{3348162} \doi{10.1007/s10107-014-0843-4}

\bibitem{delsarte77}
P. Delsarte, J.-M. Goethals, and J.~J. Seidel, \emph{Spherical codes and
  designs}, Geometriae Dedicata \textbf{6} (1977), no.~3, 363--388.
  \MR{0485471}

\bibitem{MR4263438}
M. Dostert, D. de~Laat, and P. Moustrou, \emph{Exact semidefinite programming
  bounds for packing problems}, SIAM J. Optim. \textbf{31} (2021), no.~2,
  1433--1458. \MR{4263438} \doi{10.1137/20M1351692}

\bibitem{fieker17}
C. Fieker, W. Hart, T. Hofmann, and F. Johansson, \emph{Nemo/{H}ecke: computer
  algebra and number theory packages for the {J}ulia programming language},
  I{SSAC}'17---{P}roceedings of the 2017 {ACM} {I}nternational {S}ymposium on
  {S}ymbolic and {A}lgebraic {C}omputation, ACM, New York, 2017, pp.~157--164.
  \MR{3703682}

\bibitem{fulton91}
W. Fulton and J. Harris, \emph{Representation theory}, Graduate Texts in
  Mathematics, vol. 129, Springer-Verlag, New York, 1991, A first course,
  Readings in Mathematics. \MR{1153249} \doi{10.1007/978-1-4612-0979-9}

\bibitem{gelbart74}
S.~S. Gelbart, \emph{A theory of {S}tiefel harmonics}, Trans. Amer. Math. Soc.
  \textbf{192} (1974), 29--50. \MR{425519} \doi{10.2307/1996816}

\bibitem{MR2952510}
D.~C. Gijswijt, H.~D. Mittelmann, and A. Schrijver, \emph{Semidefinite code
  bounds based on quadruple distances}, IEEE Trans. Inform. Theory \textbf{58}
  (2012), no.~5, 2697--2705. \MR{2952510} \doi{10.1109/TIT.2012.2184845}

\bibitem{glazyrin18}
A. Glazyrin and W.-H. Yu, \emph{Upper bounds for {$s$}-distance sets and
  equiangular lines}, Adv. Math. \textbf{330} (2018), 810--833. \MR{3787558}
  \doi{10.1016/j.aim.2018.03.024}

\bibitem{gorinlopez08}
T. Gorin and G.~V. L\'{o}pez, \emph{Monomial integrals on the classical
  groups}, J. Math. Phys. \textbf{49} (2008), no.~1, 013503, 20. \MR{2385262}
  \doi{10.1063/1.2830520}

\bibitem{grosskunze77}
K.~I. Gross and R.~A. Kunze, \emph{A {P}oisson integral for {S}tiefel
  harmonics}, 1977, pp.~353--362. \MR{0622201}

\bibitem{gross1984finite}
K.~I. Gross and R.~A. Kunze, \emph{Finite dimensional induction and new results
  on invariants for classical groups, i}, American Journal of Mathematics
  \textbf{106} (1984), no.~4, 893--974.

\bibitem{MR3025417}
J. Hilgert and K.-H. Neeb, \emph{Structure and geometry of {L}ie groups},
  Springer Monographs in Mathematics, Springer, New York, 2012. \MR{3025417}
  \doi{10.1007/978-0-387-84794-8}

\bibitem{MR4334975}
Z. Jiang, J. Tidor, Y. Yao, S. Zhang, and Y. Zhao, \emph{Equiangular lines with
  a fixed angle}, Ann. of Math. (2) \textbf{194} (2021), no.~3, 729--743.
  \MR{4334975} \doi{10.4007/annals.2021.194.3.3}

\bibitem{MR4398788}
F. Johansson, \emph{Calcium: computing in exact real and complex fields},
  I{SSAC} '21---{P}roceedings of the 2021 {I}nternational {S}ymposium on
  {S}ymbolic and {A}lgebraic {C}omputation, ACM, New York, [2021] \copyright
  2021, pp.~225--232. \MR{4398788} \doi{10.1145/3452143.3465513}

\bibitem{KL78}
G.~A. Kabatyanskii and V.~I. Levenshtein, \emph{Bounds for packings on a sphere
  and in space}, Problemy Peredachi Informatsii \textbf{14} (1978), no.~1,
  3--25, English translation in Problems of Information Transmission
  \textbf{14}, 1--17, 1978.

\bibitem{kao22+}
W.-J. Kao, S. Suda, and W.-H. Yu, \emph{Semidefinite programming bounds for
  complex spherical codes}, 2022. \doi{10.48550/ARXIV.2204.04108}

\bibitem{kaoyu}
W.-J. Kao and W.-H. Yu, \emph{Four-point semidefinite bound for equiangular
  lines}, 2022. \doi{10.48550/ARXIV.2203.05828}

\bibitem{king16}
E.~J. King and X. Tang, \emph{New upper bounds for equiangular lines by pillar
  decomposition}, SIAM J. Discrete Math. \textbf{33} (2019), no.~4, 2479--2508.
  \MR{4043092} \doi{10.1137/19M1248881}

\bibitem{larman77}
D.~G. Larman, C.~A. Rogers, and J.~J. Seidel, \emph{On two-distance sets in
  {E}uclidean space}, Bull. London Math. Soc. \textbf{9} (1977), no.~3,
  261--267. \MR{0458308}

\bibitem{lasserre01}
J.~B. Lasserre, \emph{Global optimization with polynomials and the problem of
  moments}, SIAM J. Optim. \textbf{11} (2000/01), no.~3, 796--817. \MR{1814045}
  \doi{10.1137/S1052623400366802}

\bibitem{MR1997246}
M. Laurent, \emph{A comparison of the {S}herali-{A}dams,
  {L}ov\'{a}sz-{S}chrijver, and {L}asserre relaxations for 0-1 programming},
  Math. Oper. Res. \textbf{28} (2003), no.~3, 470--496. \MR{1997246}
  \doi{10.1287/moor.28.3.470.16391}

\bibitem{lemmens73}
P. Lemmens and J.~J. Seidel, \emph{Equiangular lines}, J. Algebra \textbf{24}
  (1973), 494--512. \MR{0307969}

\bibitem{MR3654194}
B. Litjens, S. Polak, and A. Schrijver, \emph{Semidefinite bounds for nonbinary
  codes based on quadruples}, Des. Codes Cryptogr. \textbf{84} (2017), no.~1-2,
  87--100. \MR{3654194} \doi{10.1007/s10623-016-0216-5}

\bibitem{lin20+}
F.-Y. Liu and W.-H. Yu, \emph{Semidefinite programming bounds for spherical
  three-distance sets}, 2020. \doi{10.48550/ARXIV.2005.01324}

\bibitem{martin07}
W.~J. Martin, M. Muzychuk, and J. Williford, \emph{Imprimitive cometric
  association schemes: constructions and analysis}, J. Algebraic Combin.
  \textbf{25} (2007), no.~4, 399--415. \MR{2320370}
  \doi{10.1007/s10801-006-0043-2}

\bibitem{MR439403}
R.~J. McEliece, E.~R. Rodemich, H. Rumsey, Jr., and L.~R. Welch, \emph{New
  upper bounds on the rate of a code via the {D}elsarte-{M}ac{W}illiams
  inequalities}, IEEE Trans. Inform. Theory \textbf{IT-23} (1977), no.~2,
  157--166. \MR{439403} \doi{10.1109/tit.1977.1055688}

\bibitem{musin14}
O.~R. Musin, \emph{Multivariate positive definite functions on spheres},
  Discrete geometry and algebraic combinatorics, Contemp. Math., vol. 625,
  Amer. Math. Soc., Providence, RI, 2014, pp.~177--190. \MR{3289412}
  \doi{10.1090/conm/625/12498}

\bibitem{musin11}
O.~R. Musin and H. Nozaki, \emph{Bounds on three- and higher-distance sets},
  European J. Combin. \textbf{32} (2011), no.~8, 1182--1190. \MR{2838006}
  \doi{10.1016/j.ejc.2011.03.003}

\bibitem{nakata10}
M. Nakata, \emph{A numerical evaluation of highly accurate multiple-precision
  arithmetic version of semidefinite programming solver: {SDPA}-{GMP},-{QD}
  and-{DD}.}, Computer-Aided Control System Design (CACSD), 2010 IEEE
  International Symposium on, IEEE, 2010, pp.~29--34.

\bibitem{nebe12}
G. Nebe and B. Venkov, \emph{On tight spherical designs}, Algebra i Analiz
  \textbf{24} (2012), no.~3, 163--171. \MR{3014131}
  \doi{10.1090/S1061-0022-2013-01249-0}

\bibitem{neuimaier89}
A. Neumaier, \emph{Graph representations, two-distance sets, and equiangular
  lines}, Linear Algebra Appl. \textbf{114/115} (1989), 141--156. \MR{986870}

\bibitem{nozaki11}
H. Nozaki, \emph{A generalization of {L}arman-{R}ogers-{S}eidel's theorem},
  Discrete Math. \textbf{311} (2011), no.~10-11, 792--799. \MR{2781621}
  \doi{10.1016/j.disc.2011.01.026}

\bibitem{Par00}
P.~A. Parrilo, \emph{Structured semidefinite programs and semialgebraic
  geometry methods in robustness and optimization}, PhD thesis, Caltech, 2000.

\bibitem{schoenberg42}
I.~J. Schoenberg, \emph{Positive definite functions on spheres}, Duke Math. J.
  \textbf{9} (1942), 96--108. \MR{0005922}

\bibitem{MR2236252}
A. Schrijver, \emph{New code upper bounds from the {T}erwilliger algebra and
  semidefinite programming}, IEEE Trans. Inform. Theory \textbf{51} (2005),
  no.~8, 2859--2866. \MR{2236252} \doi{10.1109/TIT.2005.851748}

\bibitem{szollosi20}
F. Sz\"{o}ll\H{o}si and P.~R. \"{O}sterg\aa rd, \emph{Constructions of maximum
  few-distance sets in {E}uclidean spaces}, Electron. J. Combin. \textbf{27}
  (2020), no.~1, Paper No. 1.23, 18. \MR{4061065}

\bibitem{MR3664816}
M.~S. Viazovska, \emph{The sphere packing problem in dimension 8}, Ann. of
  Math. (2) \textbf{185} (2017), no.~3, 991--1015. \MR{3664816}
  \doi{10.4007/annals.2017.185.3.7}

\bibitem{yu17}
W.-H. Yu, \emph{New bounds for equiangular lines and spherical two-distance
  sets}, SIAM J. Discrete Math. \textbf{31} (2017), no.~2, 908--917.
  \MR{3651485}

\bibitem{MR1181534}
V.~A. Yudin, \emph{Minimum potential energy of a point system of charges},
  Diskret. Mat. \textbf{4} (1992), no.~2, 115--121. \MR{1181534}
  \doi{10.1515/dma.1993.3.1.75}

\end{thebibliography}
\end{document}